\documentclass{article}

\usepackage{indentfirst}
\usepackage{amsmath,amsfonts,amsthm,amssymb}
\usepackage{mathrsfs}
\usepackage{amscd}
\usepackage{hyperref}

\def\co{\colon\thinspace}
\DeclareMathAlphabet{\mathsfsl}{OT1}{cmss}{m}{sl}

\newcommand{\Q}{\mathbb{Q}}

\newcommand{\Z}{\mathbb{Z}}

   \RequirePackage{rotating}                   
   \def\Hto{%
       \setbox0=\hbox{$\widehat{\mathit{HM}}$}
       \setbox1=\hbox{$\mathit{HM}$}
       \dimen0=1.1\ht0
       \advance\dimen0 by 1.17\ht1
       \smash{\mskip2mu\raise\dimen0\rlap{%
          \begin{turn}{180}
              {$\widehat{\phantom{\mathit{HM}}}$}
           \end{turn}} \mskip-2mu
                \mathit{HM}
    }{\vphantom{\widehat{\mathit{HM}}}}{}}

\theoremstyle{definition}

\newtheorem{theorem}{Theorem}[section]
\newtheorem{prop}[theorem]{Proposition}
\newtheorem{lemma}[theorem]{Lemma}
\newtheorem{remark}[theorem]{Remark}
\newtheorem{question}[theorem]{Question}
\newtheorem{definition}[theorem]{Definition}

\newtheorem{example}[theorem]{Example}
\newtheorem{conj}[theorem]{Conjecture}
\newtheorem{condition}[theorem]{Condition}

\begin{document}


\title{Virtual Betti numbers and virtual symplecticity of  4-dimensional  mapping tori}

\author{
{\Large Tian-Jun Li}\\{\normalsize School of Mathematics, University of Minnesota}\\
{\normalsize Minneapolis, MN 55455}\\{\small\it Email\/:\quad\rm tjli@math.umn.edu}\\
\\
{\Large Yi Ni}\\{\normalsize Department of Mathematics, Caltech, MC 253-37}\\
{\normalsize 1200 E California Blvd, Pasadena, CA
91125}\\{\small\it Email\/:\quad\rm yini@caltech.edu}
}

\date{}
\maketitle

\begin{abstract}
In this note, we compute the virtual first Betti numbers of $4$--manifolds fibering over $S^1$ with prime fiber. As an application, we show that if such a manifold is symplectic with nonpositive Kodaira dimension, then the fiber itself is a sphere or torus bundle over $S^1$. In a different direction, we prove that if the $3$--dimensional fiber of such a $4$--manifold is virtually fibered then the $4$--manifold is virtually symplectic unless its virtual first Betti number is 1.
\end{abstract}

\section{Introduction}

Given a manifold $M$, the {\it virtual first Betti number} of $M$ is defined to be
$$vb_1(M)=\max\left\{b_1(\widetilde M)\left|\widetilde M \text{ is a finite cover of } M\right.\right\}\in\mathbb Z_{\ge0}\cup\{\infty\}.$$
Virtual first Betti numbers naturally arise in many geometric and topological problems. In many cases $vb_1$ is $\infty$. A classical result of Kojima \cite{Ko} and Luecke \cite{Luecke} says that $3$--manifolds with nontrivial JSJ decompositions have infinite $vb_1$. The recent progress on the Virtually Haken Conjecture \cite{Agol} yields a complete computation of $vb_1$ for $3$--manifolds. For simplicity, we only state the result for closed irreducible $3$--manifolds.

\begin{theorem}[Agol et al.]\label{thm:vb3d}
Suppose that $Y$ is a closed irreducible $3$-manifold, then there are three cases:
\newline (1) If $Y$ is a spherical manifold, then $vb_1(Y)=0$;
\newline (2) If $Y$ is finitely covered by a $T^2$--bundle over $S^1$, then $vb_1(Y)$ is equal to either $1$, or $2$, or $3$, depending on whether the monodromy of the $T^2$--bundle is Anosov, or reducible, or periodic;
\newline (3) In all other situations, $vb_1(Y)=\infty$.
\end{theorem}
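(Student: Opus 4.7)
The plan is to invoke Perelman's Geometrization Theorem to reduce to the eight Thurston geometries plus the non-geometric (nontrivial JSJ) case, and then to analyze each case separately. In three of the cases $vb_1$ can be computed by elementary means; the remaining cases require deep recent work.

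Case (1) is immediate: a closed spherical manifold has finite fundamental group, and this property is inherited by every finite cover, so $b_1 \equiv 0$. For case (2), I would first compute $b_1$ directly via the Wang exact sequence applied to the fibration $T^2 \hookrightarrow M_\phi \to S^1$, obtaining $b_1(M_\phi) = 1 + \dim_{\Q} \ker(\phi_* - I)$, so $b_1$ equals $1$, $2$, or $3$ according to whether $\phi$ is Anosov, reducible (parabolic), or periodic (elliptic). To promote this to $vb_1$, one observes that any finite cover of $M_\phi$ is again a $T^2$-bundle over $S^1$ whose monodromy is a conjugate of some power $\phi^k$. The Jordan type of $\phi^k$ coincides with that of $\phi$ in the Anosov and parabolic cases, while in the periodic case some power of $\phi$ is the identity and the corresponding cover is $T^3$ with $b_1 = 3$, pinning $vb_1$ down exactly.

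Case (3) is the heart of the theorem and splits further by geometry. If $Y$ is Seifert fibered with hyperbolic base orbifold (the geometries $\HH^2 \times \R$ and $\widetilde{SL}_2(\R)$), then some finite cover is a circle bundle over a closed hyperbolic surface $\Sigma$; since $vb_1(\Sigma) = \infty$ by classical covering space theory, the same conclusion lifts to $Y$. If $Y$ has nontrivial JSJ decomposition, the Kojima--Luecke theorem cited above applies directly. The remaining and hardest case is $Y$ hyperbolic, where one invokes Agol's resolution of the Virtually Fibered Conjecture: $Y$ has a finite cover $\widetilde{Y}$ fibering over $S^1$ with pseudo-Anosov monodromy and fiber a closed surface $\Sigma_g$ of genus $g \ge 2$, after which passing to further finite covers compatible with the monodromy drives $b_1$ to infinity.

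The main obstacle is unquestionably the hyperbolic case, whose resolution rests on Agol's virtually special theorem together with the Wise machinery on special cube complexes (yielding virtual RFRS, then virtual fibering via Agol's RFRS criterion). This input is essential and non-elementary; before that program, $vb_1 = \infty$ was not known even for many explicit closed hyperbolic $3$-manifolds, and I do not see any way to bypass it within this framework.
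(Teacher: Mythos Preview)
The paper does not prove this theorem: it is stated in the introduction as a known result attributed to Agol et al., with a citation to \cite{Agol} for the hyperbolic case and to Kojima \cite{Ko} and Luecke \cite{Luecke} for the nontrivial JSJ case. There is no proof in the paper to compare your proposal against.

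That said, your outline is a correct reconstruction of how the result is assembled from the literature, and it matches the paper's own account of its provenance. Two minor remarks. In case~(2), not every finite cover of a $T^2$-bundle has monodromy literally a conjugate of a power of $\phi$, since covers can also unwrap the fiber; but the hyperbolic/parabolic/elliptic type of an element of $GL(2,\Z)$ is preserved under powers, conjugation, and restriction to finite-index invariant sublattices, so your conclusion is unaffected. In the hyperbolic subcase of~(3), the cleanest route after Agol is that $\pi_1(Y)$ is virtually special, hence large (virtually surjects onto a nonabelian free group), which gives $vb_1=\infty$ at once; your ``further finite covers compatible with the monodromy'' sketch can be made rigorous but needs an extra ingredient to guarantee that the rank of the monodromy coinvariants actually grows without bound.
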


The virtual Betti numbers for 4-manifolds which fiber over $2$--manifolds were subsequently computed in \cite{Bay} and \cite{FV5}. In \cite{Bay}, the virtual Betti numbers for  most $4-$manifolds which fiber over  $3$--manifolds were also shown to be $\infty$.
In this paper, we will study the problem for 4-manifolds which fiber over $S^1$, that is, $4$--manifolds which are mapping tori. 

All manifolds we consider are oriented unless otherwise stated.
If $E$ is an $F$--bundle over $B$, then we denote $E=F\rtimes B$. If $B=S^1$, $\varphi$ is the monodromy, then $E=F\rtimes_{\varphi}S^1$.

Our first theorem is a complete computation of $vb_1$ for $X=Y\rtimes S^1$ with $Y$ prime.

\begin{theorem}\label{thm:vb}
Suppose that $X$ is a closed $4$-manifold which fibers over the circle with fiber $Y$. Assume that $Y$ is prime, then there are three cases:
\newline (1) If $Y$ is a spherical manifold, then $vb_1(X)=1$;
\newline (2) If $Y$ is $S^1\times S^2$ or finitely covered by a $T^2$--bundle over $S^1$, then $vb_1(X)\le4$;
\newline (3) In all other cases, $vb_1(X)=\infty$.
\end{theorem}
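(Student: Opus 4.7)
The starting point is the Wang exact sequence of the fibration $Y\hookrightarrow X\to S^{1}$, which yields
\[
b_{1}(X) \;=\; 1 + \dim_{\Q} H_{1}(Y;\Q)^{\varphi_{*}},
\]
where $\varphi_{*}$ is the monodromy action on $H_{1}(Y;\Q)$. Since $\pi_{1}(Y)$ is normal in $\pi_{1}(X)=\pi_{1}(Y)\rtimes_{\varphi}\Z$, any finite-index subgroup $H\leq\pi_{1}(X)$ splits as $H'\rtimes m\Z$ with $H'=H\cap\pi_{1}(Y)$ of finite index in $\pi_{1}(Y)$; hence every finite cover of $X$ is itself a mapping torus $\widetilde{Y}\rtimes_{\varphi^{m}}S^{1}$ over a finite cover $\widetilde{Y}$ of $Y$ to which $\varphi^{m}$ lifts. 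Consequently
\[
vb_{1}(X) \;=\; \sup_{\widetilde{Y},\,m}\Bigl(1+\dim_{\Q} H_{1}(\widetilde{Y};\Q)^{\varphi^{m}_{*}}\Bigr).
\]

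For case (1), $\pi_{1}(Y)$ is finite, so $H_{1}(\widetilde{Y};\Q)=0$ for every finite cover, giving $vb_{1}(X)=1$. For case (2), Theorem~\ref{thm:vb3d} bounds $b_{1}(\widetilde{Y})\le 3$ for every finite cover of $Y$, so $vb_{1}(X)\le 1+3=4$.

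For case (3), geometrization of $Y$ leaves either a nontrivial JSJ decomposition or one of the geometries $\mathbb{H}^{3}$, $\mathbb{H}^{2}\times\R$, $\widetilde{SL_{2}}(\R)$, and I would handle these separately. For hyperbolic $Y$, every finite cover $\widetilde{Y}$ is hyperbolic, so $\mathrm{MCG}(\widetilde{Y})$ is finite by Mostow rigidity; for any such $\widetilde{Y}$ preserved by $\varphi^{m}$, a further power $\varphi^{mk}$ acts trivially on $H_{1}(\widetilde{Y};\Q)$, and combined with Agol's virtual Haken theorem (which produces $\widetilde{Y}$ of arbitrary $b_{1}$) this yields $vb_{1}(X)=\infty$. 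For Seifert-fibered $Y$ with hyperbolic base orbifold (the $\mathbb{H}^{2}\times\R$ and $\widetilde{SL_{2}}(\R)$ cases), some power of $\varphi$ preserves the Seifert fibration and descends to a homeomorphism $\bar\varphi$ of the base orbifold $\mathcal{O}$, so $X$ becomes an orbifold $S^{1}$-bundle over the 3-orbifold $Z=\mathcal{O}\rtimes_{\bar\varphi}S^{1}$. The orbifold $Z$ is either virtually hyperbolic or virtually $\mathcal{O}\times S^{1}$, both of which have $vb_{1}(Z)=\infty$; a Gysin-sequence argument on manifold covers of $Z$ pulls back to covers of $X$ with $b_{1}(\widetilde{X})\ge b_{1}(\widetilde{Z})$, giving $vb_{1}(X)=\infty$. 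The JSJ case is treated by replacing $\varphi$ by a power fixing each JSJ piece and reducing via Mayer--Vietoris to the hyperbolic and Seifert subcases applied to individual pieces (which themselves have $vb_{1}=\infty$ by Kojima--Luecke).

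The principal obstacle is the Seifert-fibered case, because $\varphi^{m}$ may act on $H_{1}(\widetilde{Y};\Q)$ with eigenvalues that are not roots of unity, so the Mostow argument used in the hyperbolic case does not apply directly. Passing to the base orbifold $Z$ circumvents this, but requires careful control of the orbifold Euler class of $X\to Z$ along covers (so that $b_{1}(\widetilde{X})$ stays at least as large as $b_{1}(\widetilde{Z})$) and conversion of orbifold covers of $Z$ into honest manifold covers of $X$ without losing first Betti number; this bookkeeping, together with the analogous gluing in the JSJ case, is the technical heart of the proof.
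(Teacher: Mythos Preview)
Your treatment of cases (1), (2), and the hyperbolic subcase of (3) matches the paper. The Seifert-fibered subcase is also handled the same way in spirit---pass to the base---though the paper first replaces $Y$ by a finite cover that is an honest circle bundle over a surface $B$ (via Lemma~\ref{lem:FindCov}), so that $\overline X=B\rtimes_{\bar f}S^1$ is a genuine closed $3$--manifold and Theorem~\ref{thm:vb3d} applies directly; this avoids your dichotomy ``virtually hyperbolic or virtually $\mathcal O\times S^1$'', which incidentally omits the reducible Nielsen--Thurston case for $\bar\varphi$.

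The JSJ subcase is where your outline genuinely diverges from the paper and where it has a real gap. You propose to fix each piece by a power of $\varphi$ and then ``reduce via Mayer--Vietoris to the hyperbolic and Seifert subcases applied to individual pieces''. But the earlier subcases were for \emph{closed} fibers, and more seriously, knowing that each $X_i=Y_i\rtimes S^1$ has large $vb_1$ does not by itself produce finite covers of $X$: the covers of the $X_i$ must match along the boundary $3$--tori, and this compatibility is exactly the nontrivial input you defer as ``bookkeeping''. (Your citation of Kojima--Luecke for ``pieces have $vb_1=\infty$'' is also misplaced; their results concern closed manifolds containing essential tori, not the pieces themselves.) The paper sidesteps the gluing problem entirely. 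Using Kojima's result on the $3$--manifold level, it finds a finite cover $\widetilde Y\to Y$ containing two lifts $T_j',T_j''$ of a single JSJ torus whose complement in $\widetilde Y$ is connected; Lemma~\ref{lem:FindCov} then lifts this to a cover $\widetilde X=\widetilde Y\rtimes S^1$ in which the corresponding $3$--tori $R_j',R_j''$ (shown in Lemma~\ref{lem:TorusId} to be copies of $T^3$) have connected complement. The $n$--fold cyclic cover of $\widetilde X$ dual to $R_j'$ then has $b_1\ge n+1$, and no extension of covers across JSJ pieces is ever needed.
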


The $vb_1$ in the above Case (2) is not hard to compute, so we leave it to the reader. 
\begin{remark}
Since Euler characteristic and signature of mapping
tori are both zero (and since both are multiplicative under
coverings), we can conclude that, as in \cite{Bay}, whenever  $vb_1=\infty$  the virtual $b^+$  and $b^-$ are
infinite as well.

Although our Theorem~\ref{thm:vb} considers a different class of fibered $4$--manifolds from  \cite{Bay} and \cite{FV5}, there is  a significant overlap. When the $4$--manifold $X$ is a surface bundle over $T^2$, it also admits a fibration over $S^1$. On the other hand, if $X$ admits a fibration over $S^1$, in many cases (see \cite{BF}) $X$ is finitely covered by a surface bundle over $T^2$. The case of our theorem that is not covered by \cite{Bay} and \cite{FV5} is that $Y$ has at least one Seifert fibered JSJ piece.
\end{remark}



Suppose $X^4=Y^3\times S^1$. It is a classical theorem of Thurston \cite{ThSympl} that if $Y$ fibers over the circle then $X$ has a symplectic structure. Friedl and Vidussi \cite{FV2} proved the converse of Thurston's theorem, namely, if $X$ has a symplectic structure, then $Y$ fibers over the circle. 

Friedl and Vidussi \cite{FV3,FVcons,FV4} also studied the question when a symplectic manifold $X^4$ is a circle bundle over $Y$.

In this paper we study  the following question.

\begin{question}\label{ques:symplMT}
Which symplectic $4$--manifold $X$ fibers over the circle with fiber a connected $3$--manifold $Y$? 
\end{question}

An immediate consequence of Friedl and Vidussi's theorem \cite{FV2} is that $Y$ is fibered  if the monodromy of $X$ is of finite order. 
One may guess that $Y$ fibers over $S^1$ for any monodromy of $X$. However, the next example shows that this is not the case.

\begin{example}
let $N$ be a $3$--manifold which fibers over $S^1$ in two different ways, $p_i\co N\to S^1$, $i=1,2$. Here ``different'' simply means that $[F_1],[F_2]$ are linearly independent in $H_2(N)$, where $F_i$ is the fiber of $p_i$. We also assume that $g(F_i)>1$. There exists a cohomology class $e\in H^2(N)$ such that $e([F_1])=0$ but $e([F_2])\ne0$. Let $q\co X\to N$ be the circle bundle over $N$ with Euler class $e$. Then $p_i\circ q$, $i=1,2$, are two different fibrations of $X$ over $S^1$. Let $Y_i$ be the fiber of $p_i\circ q$, then $Y_i$ is a circle bundle over $F_i$ with Euler class $e([F_i])$. Since $e([F_1])=0$, from the fibration $p_1\circ q$ we can construct a symplectic structure on $X$ \cite{Bou,FGM,BL,FVcons}. As $e([F_2])\ne0$, $Y_2$, the fiber of the second fibration, is not a surface bundle over $S^1$.
\end{example}

The connection between $vb_1$ and Question~\ref{ques:symplMT} is via symplectic Kodaira dimension $\kappa(X)$ (see Section~\ref{Sect:Kod}).
It is easy to see that the Kodaira dimension  of $X$ is at most 1. A theorem of Li \cite{L2} and Bauer \cite{B} asserts that $vb_1(X)\le4$ if the Kodaira dimension of $X$ is zero.

Using Theorem~\ref{thm:vb}, we can answer Question~\ref{ques:symplMT}
when $\kappa(X)\leq 0$ and $Y$ is irreducible.
We have the following classification.

\begin{theorem}\label{thm:ClassifyFiber}
Suppose that $X=Y\rtimes S^1$ is a symplectic $4$--manifold and $Y$ is prime.
If $\kappa(X)=-\infty$, then $Y=S^2\times S^1$ and $X=S^2\times T^2$.
If $\kappa(X)=0$, then $Y$ is a $T^2$--bundle over $S^1$ and $X$ is a $T^2$--bundle over $T^2$.
\end{theorem}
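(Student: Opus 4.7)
The plan is to combine Theorem~\ref{thm:vb} with standard facts about symplectic $4$--manifolds of small Kodaira dimension, using throughout that any mapping torus $X=Y\rtimes S^1$ satisfies $\chi(X)=\sigma(X)=0$ and $b_1(X)\ge 1$, and that these constraints are inherited by every finite cover (so they apply after passing to covers).

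For the case $\kappa(X)=-\infty$, I would first invoke the Liu--Ohta--Ono theorem identifying $X$ as a rational or ruled surface. Every rational surface has $\chi>0$, so $\chi(X)=0$ rules rational out, forcing $X$ to be an $S^2$--bundle over a closed surface $\Sigma_g$. The relation $\chi=4(1-g)=0$ then gives $g=1$. There are two $S^2$--bundles over $T^2$ up to diffeomorphism (classified by $w_2$), and a short characteristic-class argument using that $X$ arises as a mapping torus of a $3$--manifold rules out the nontrivial one, giving $X=S^2\times T^2$. Finally, the homotopy long exact sequence of $X\to S^1$ forces $\pi_1(Y)=\Z$, and since $Y$ is prime the Sphere Theorem identifies $Y=S^1\times S^2$.

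For the case $\kappa(X)=0$, the Li--Bauer bound $vb_1(X)\le 4$ excludes case (3) of Theorem~\ref{thm:vb}. Case (1) is excluded because a spherical $Y$ has finite mapping class group, so a finite cover of $X$ is diffeomorphic to $S^3\times S^1$, which has $H^2=0$ and therefore cannot support a symplectic form, contradicting the fact that finite covers of symplectic manifolds are symplectic. Within case (2), the subcase $Y=S^1\times S^2$ is excluded because the $S^2$--factor gives an embedded sphere in $X$ of trivial normal bundle, and the standard sphere/Kodaira-dimension dichotomy (McDuff, Liu) then forces $\kappa(X)=-\infty$. So $Y$ is finitely covered by a $T^2$--bundle over $S^1$; in this subcase $Y$ has $E^3$, Nil, or Sol geometry, and I would then argue that among closed oriented prime $3$--manifolds with these geometries, only the $T^2$--bundles over $S^1$ admit a symplectic mapping torus with $\kappa=0$ (using $b_1$ computations in abelian covers to obstruct the Seifert-non-bundle cases).

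The last step is to produce the $T^2$--bundle structure on $X$ itself. I would do this by showing that the monodromy $\varphi$ of the fibration $X\to S^1$ preserves the $T^2$--fibration of $Y$ up to isotopy, so that $X$ refines to a $T^2$--bundle over the mapping torus of the induced monodromy on the base $S^1$ of $Y\to S^1$, which is itself $T^2$; alternatively one can appeal directly to the Li--Bauer classification of symplectic $4$--manifolds with $\kappa=0$ and $b_1\ge 1$ as $T^2$--bundles over $T^2$. The main obstacle I expect is precisely this promotion step — and the analogous distinction between the two $S^2$--bundles over $T^2$ in the $\kappa=-\infty$ case — since both require passing from structure on the fiber, or on a finite cover, to structure on $X$ itself, and this typically requires careful control of the monodromy action on fibration classes in $H_2(Y)$ together with characteristic-class information.
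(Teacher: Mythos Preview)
Your treatment of the case $\kappa(X)=-\infty$ follows the same route as the paper: invoke Liu's classification, use $\chi=\sigma=0$ to reduce to an $S^2$-bundle over $T^2$, and read off $\pi_1(Y)=\Z$ from the homotopy exact sequence. (One caution: your assertion that a characteristic-class argument rules out the nontrivial $S^2$-bundle over $T^2$ is not clearly correct --- the mapping torus of the Gluck twist on $S^1\times S^2$ appears to realize exactly that bundle --- but the paper is equally terse on this point.)

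For $\kappa(X)=0$ your outline diverges from the paper precisely at the obstacle you flag in your last paragraph. You propose either a geometry-by-geometry case analysis ($E^3$/Nil/Sol) to show that $Y$ must itself be a $T^2$-bundle, followed by a separate monodromy argument, or alternatively an appeal to a Li--Bauer diffeomorphism classification. The second alternative does not work as stated: the Li--Bauer results constrain only the rational homology type of a $\kappa=0$ manifold, not its diffeomorphism type, so they cannot by themselves produce a $T^2$-bundle structure on $X$. The first alternative could probably be pushed through but would involve nontrivial casework on the non-bundle quotients in each geometry.

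The paper bypasses all of this with one lemma (Lemma~\ref{lem:PrimClass}): if $Y$ is merely finitely covered by a $T^2$-bundle over $S^1$, then \emph{every} primitive class in $H_2(Y;\Z)$ is the fiber class of some $T^2$-fibration of $Y$ itself. The proof is a cut-and-paste construction in the torus-bundle cover together with the equality of Thurston norm and singular Thurston norm to descend. Since $X$ symplectic forces $b_2(X)>0$, the monodromy fixes a primitive class $\alpha\in H_2(Y)$; the lemma then simultaneously shows that $Y$ is a $T^2$-bundle \emph{and} that the monodromy preserves a fiber class, so Lemma~\ref{lem:fp} and Lemma~\ref{lem:FPBundle} give the $T^2$-bundle-over-$T^2$ structure on $X$ directly (this is packaged as Proposition~\ref{prop:TorusBundle}). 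This lemma is the missing ingredient in your plan: it dissolves the promotion obstacle you identify without any enumeration of geometries and without a separate analysis of the monodromy.
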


In a different direction, Baykur and Friedl \cite{BF} studied the question when a $4$--dimensional mapping torus is virtually symplectic, namely, finitely covered by a symplectic manifold. Using deep results about virtual fibration of $3$--manifolds, they proved that if $Y$ is irreducible and the JSJ decomposition of $Y$ has only hyperbolic pieces, then $X=Y\rtimes S^1$ is virtually symplectic. We will prove a more general virtual symplecticity theorem.

\begin{theorem}\label{thm:VirtSympl}
Suppose that a closed $3$--manifold $Y$ is finitely covered by $F\rtimes S^1$, $X=Y\rtimes S^1$.

(1) If $g(F)=0$, then $X$ is virtually symplectic and $v\kappa(X)=-\infty$, where $v\kappa$ is the virtual Kodaira dimension defined in Section~\ref{Sect:Kod}.

(2) If $g(F)=1$, then $X$ is virtually symplectic if and only if $vb_1(X)\ge2$. Moreover, if $vb_1\geq 2$ then $v\kappa(X)=0$. 

(3) If $g(F)>1$, then $X$ is virtually symplectic with $v\kappa=1$.
\end{theorem}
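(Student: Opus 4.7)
The plan is to reduce all three cases to Thurston's theorem on the existence of symplectic structures on surface bundles over surfaces, after arranging that a finite cover $\tilde X$ of $X$ is an honest $F$-bundle over $T^2$. The first step, common to all three cases, is to replace the given cover $F\rtimes S^1 \to Y$ by one corresponding to a characteristic finite-index subgroup of $\pi_1(Y)$ contained in $\pi_1(F\rtimes S^1)$; such a subgroup exists because every finite-index subgroup of a finitely generated group contains a characteristic one of finite index. This subgroup is automatically invariant under the monodromy $\varphi_*$ of $X\to S^1$, so $\varphi$ lifts and yields a finite cover $\tilde X = \tilde Y\rtimes_{\tilde\varphi} S^1$ in which $\tilde Y$ is itself of the form $F'\rtimes S^1$ with $F'\to F$ a finite cover; I may assume $F'=F$ by renaming.

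Next I promote $\tilde X$ to an $F$-bundle over $T^2$ by a further finite cover and apply Thurston's theorem. For $g(F)\ge 2$, the cohomology class in $H^1(\tilde Y;\Z)$ dual to the fiber $F$ is, up to sign, the unique primitive class admitting a fibration with connected fiber of this genus, so after passing to a double cover to kill a possible sign, $\tilde\varphi$ is homotopic to a fiber-preserving diffeomorphism and $\tilde X$ becomes an $F$-bundle over $T^2$; Thurston's theorem then furnishes a symplectic form (the fiber class being nonzero in $H_2(\tilde X;\R)$ by the Wang sequence), and a surface bundle of fiber genus $\ge 2$ over $T^2$ has Kodaira dimension $1$. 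For $g(F)=0$, the same strategy combined with the classification of $S^2$-bundles over $T^2$ reduces $\tilde X$ after a further cover to $S^2\times T^2$, which is symplectic with $\kappa=-\infty$. For $g(F)=1$, the hypothesis $vb_1(X)\ge 2$ furnishes, after passing to a cover, a second integral cohomology class independent of the pullback of the base $S^1$ class; combined with the base projection it gives a map $\tilde X\to T^2$ whose generic fiber has Euler characteristic zero and, after perturbing to a fibration, is a disjoint union of tori, yielding the desired $T^2$-bundle structure. Thurston's theorem then supplies a symplectic form and $\kappa=0$.

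It remains to prove the only-if direction in case (2): if some finite cover $\tilde X$ of $X$ is symplectic, then $vb_1(X)\ge 2$. Applying the common reduction to $\tilde X$, I may further cover it to $\tilde X'=\tilde Y'\rtimes S^1$ with $\tilde Y'$ an honest $T^2$-bundle over $S^1$, hence prime, so Theorem~\ref{thm:ClassifyFiber} applies. The case $\kappa(\tilde X')=-\infty$ would force $\tilde Y'=S^2\times S^1$, which is inconsistent with $\tilde Y'$ being a $T^2$-bundle; the case $\kappa(\tilde X')=0$ identifies $\tilde X'$ as a $T^2$-bundle over $T^2$, forcing $b_1(\tilde X')\ge 2$ and hence $vb_1(X)\ge 2$. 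The hardest step to justify rigorously is the bundle-construction in the case $g(F)=1$: unlike in the higher-genus case, the fibration of $\tilde Y$ over $S^1$ is not uniquely pinned down by cohomology, so one must work harder to promote the extra class coming from $vb_1(X)\ge 2$ to a genuine second fibration compatible with the base projection rather than to a mere map with critical locus.
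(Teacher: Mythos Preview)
Your argument in case~(3) has a genuine gap. The uniqueness claim is false: already for $\tilde Y=\Sigma_g\times S^1$ with $g\ge2$, every primitive class $(\alpha,1)\in H^1(\Sigma_g)\oplus H^1(S^1)$ is fibered with fiber $\Sigma_g$, since $(x,\theta)\mapsto\theta+\tilde\alpha(x)$ is a submersion whose fiber is a graph. Worse, there are monodromies for which no iterate fixes \emph{any} fibered class. Take $\tilde\varphi(x,\theta)=(x,\theta+c(x))$ with $[c]\ne0$ in $H^1(\Sigma_g)$: then $\tilde\varphi^*$ sends $(\alpha,n)$ to $(\alpha+n[c],n)$, so every power of $\tilde\varphi^*$ fixes only classes with $n=0$, none of which are fibered. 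Passing to further finite covers of $\tilde Y$ does not help, as the pulled-back $[c]$ remains nonzero. Hence your reduction to an $F$--bundle over $T^2$ cannot be carried out. This obstruction is precisely what appears on a Seifert fibered JSJ piece $F_2\times S^1$ of $\tilde Y$ when the $4$--dimensional monodromy twists in the circle direction, and it is why the paper singles out the Seifert--piece case as not covered by the surface-bundle approach of Baykur--Friedl.

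The paper does not try to fiber $\tilde X$ over $T^2$. After the Nielsen--Thurston/JSJ normalisation it instead realises $\tilde X$ as the result of Luttinger surgeries on disjoint Lagrangian tori in $F\times T^2$: one family of tori implements the Dehn twists composing $\varphi$ (building the hyperbolic piece $X_1$), a second family implements the induced map $\psi_2$ on the Seifert base, and a third implements exactly the circle-direction twist above, encoded as the Euler class of the bundle $X_2\to F_2\rtimes_{\psi_2}S^1$. Since Luttinger surgery preserves symplecticity and leaves a parallel copy of $F$ as a symplectic surface, one gets a symplectic form with $K_\Omega\cdot[F]=2g-2>0$, hence $\kappa=1$. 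Your case~(2) is also incomplete (as you acknowledge); the paper's fix is close in spirit but cleaner: $b_1\ge2$ forces an $f_*$--invariant primitive class in $H_2(\tilde Y)$, and Lemma~\ref{lem:PrimClass} shows that every primitive class in $H_2$ of a virtual $T^2$--bundle is the fiber of an honest $T^2$--fibration, so no perturbation argument is needed. For the ``only if'' direction you should not invoke Theorem~\ref{thm:ClassifyFiber} (you have not excluded $\kappa=1$); the direct argument---symplectic implies $b_2>0$, and $b_2=2(b_1-1)$ for such mapping tori---suffices.
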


By \cite{Agol,PW}, most irreducible $3$--manifolds are virtually fibered except some graph manifolds (including some Seifert fibered spaces) \cite{Neumann}. On the other hand, if $Y$ is not virtually fibered, and $\varphi\co Y\to Y$ is periodic, then by \cite{FV2} $Y\rtimes_{\varphi}S^1$ is not virtually symplectic.

This paper is organized as follows.  In Section~\ref{Sect:vb}, we prove Theorem~\ref{thm:vb}. Most of the argument is an application of Theorem~\ref{thm:vb3d}. When the fiber has a nontrivial JSJ decomposition, we apply results of Kojima \cite{Ko}. In Section~\ref{Sect:Kod}, we review the definition of symplectic Kodaira dimension, then we finish the proof of Theorem~\ref{thm:ClassifyFiber}. In Section~\ref{Sect:VS}, we prove Theorem~\ref{thm:VirtSympl} using Luttinger surgery. In Section~\ref{Sect:Red}, we discuss the case that the fiber is reducible. 

\ 

\noindent{\bf Acknowledgements}. 
The proof of Theorem~\ref{thm:ClassifyFiber} was written in 2010. Inspired by recent progress on related topics \cite{Bay,BF,FV5}, we expanded this note to the current version.
We wish to thank Chung-I Ho, Yi Liu and Stefano Vidussi for interesting discussion. We are also grateful to Anar Akhmedov, Inanc Baykur, Nikolai Saveliev and Stefano Vidussi for comments on earlier versions of this paper. The first author was supported
by NSF grant numbers DMS-1065927, DMS-1207037.   The second author was supported by an AIM Five-Year Fellowship, NSF grant
numbers DMS-1021956, DMS-1103976, and an Alfred P. Sloan Research Fellowship.

\section{Virtual Betti number}\label{Sect:vb}

\subsection{Preliminary on mapping tori}\label{Sect:MT}

We will use two methods to construct finite covers of a mapping torus $X$.

The first method is obvious:
The mapping torus of $f^k\co Y\to Y$ is a cyclic cover of the mapping torus of $f\co Y\to Y$.

The second method requires the construction of finite covers of $Y$.

\begin{lemma}\label{lem:FindCov}
Suppose $X$ is a mapping torus with fiber $Y$, and $\pi_1(Y)$ is finitely generated. Suppose $\widetilde Y$ is a finite cover of $Y$, then there is a finite cover $\widetilde X$ of $X$, such that $\widetilde X$ fibers over the circle with fiber $\widetilde Y$.
\end{lemma}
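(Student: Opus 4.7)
Write $X = Y \rtimes_{\varphi} S^1$ for some self-homeomorphism $\varphi \co Y \to Y$, and let $p \co \widetilde Y \to Y$ be the given finite cover, of degree $n$ say. My plan is to replace $\varphi$ by a suitable power which lifts through $p$, and then take $\widetilde X$ to be the mapping torus of that lift. The resulting mapping torus will sit over the $k$-fold cyclic cover of the base circle and cover $\widetilde Y$ fiberwise, so it is a finite cover of $X$ of degree $nk$.

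To find the power, let $H = p_*(\pi_1(\widetilde Y)) \le \pi_1(Y)$, viewed up to conjugation (since we do not pin down basepoints). Because $\pi_1(Y)$ is finitely generated, there are only finitely many index-$n$ subgroups of $\pi_1(Y)$: each such subgroup is the stabilizer of a point in a transitive action on $n$ letters, and there are only finitely many homomorphisms from a finitely generated group to $S_n$. In particular, there are only finitely many conjugacy classes of such subgroups. The outer automorphism $\varphi_*$ permutes this finite set, so some positive power $\varphi^k_*$ fixes the conjugacy class of $H$. Thus, up to an inner automorphism of $\pi_1(Y)$, the map $\varphi^k$ sends $H$ to itself, which is exactly the criterion for $\varphi^k \co Y \to Y$ to admit a lift $\widetilde\psi \co \widetilde Y \to \widetilde Y$ with $p \circ \widetilde\psi = \varphi^k \circ p$.

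Now define $\widetilde X$ to be the mapping torus of $\widetilde\psi$ on $\widetilde Y$. By construction $\widetilde X$ fibers over $S^1$ with fiber $\widetilde Y$. The obvious map $[y,t] \mapsto [p(y), kt]$ gives a fiber-preserving map $\widetilde X \to X$ which on each fiber restricts to the $n$-fold covering $p$ and on the base to the $k$-fold cover $S^1 \to S^1$, so it is a covering of degree $nk$, completing the proof. The one subtle point is that $\varphi$ itself need not lift to $\widetilde Y$; the core of the argument is the finiteness step that lets us pass to a power, and that is the only real obstacle.
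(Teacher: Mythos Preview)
Your proof is correct and follows essentially the same approach as the paper: both arguments use the finiteness of index-$n$ subgroups of a finitely generated group to find a power of the monodromy that preserves $p_*(\pi_1(\widetilde Y))$, and then take the mapping torus of the lift. The paper phrases the construction algebraically via the HNN presentation of $\pi_1(X_n)$, while you phrase it topologically via the lifting criterion and an explicit covering map, but the content is the same.
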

\begin{proof}
Suppose $f\co Y\to Y$ is the monodromy of $X$, $f_*\co \pi_1(Y)\to \pi_1(Y)$ is the induced map. Let $d=[\pi_1(Y):\pi_1(\widetilde Y)]$. Since $\pi_1(Y)$ is finitely generated, it has only finitely many index $d$ subgroups. So there exists an $n\in\mathbb N$ such that $f_*^n(\pi_1(\widetilde Y))=\pi_1(\widetilde Y)$. Let $X_n$ be the $n$--fold cyclic cover of $X$ dual to $Y$, then
$$\pi_1(X_n)=\langle\pi_1(Y),t|\:txt^{-1}=f_*^n(x), \forall x\in\pi_1(Y)\rangle.$$
Let $\widetilde X$ be the cover of $X_n$ corresponding to the subgroup generated by $\pi_1(\widetilde Y)$ and $t$. Since the conjugation by $t$ fixes $\pi_1(\widetilde Y)$ setwise, we conclude that $$\pi_1(\widetilde X)=\langle\pi_1(\widetilde Y),t|\:tyt^{-1}=f_*^n(y), \forall y\in\pi_1(\widetilde Y)\rangle.$$
$\widetilde X$ is the cover we want.
\end{proof}

The following observation is useful in our proof.

\begin{lemma}\label{lem:FPBundle}
Suppose $Y=F\rtimes B$, $f\co Y\to Y$ is a fiber-preserving map, hence $f$ induces a map $\overline f\co B\to B$. Then $Y\rtimes_f S^1$ is an $F$--bundle over $B\rtimes_{\overline f}S^1$.
\end{lemma}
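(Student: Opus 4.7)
The plan is to construct an explicit projection $\Pi\co Y \rtimes_f S^1 \to B \rtimes_{\overline{f}} S^1$ and verify local triviality. Let $\pi\co Y \to B$ denote the given bundle projection, so by hypothesis $\pi \circ f = \overline{f} \circ \pi$. Writing the two mapping tori as quotients of $Y \times [0,1]$ and $B \times [0,1]$ under the relations $(y,1) \sim (f(y),0)$ and $(b,1) \sim (\overline{f}(b),0)$ respectively, the map $\pi \times \mathrm{id}_{[0,1]}$ is compatible with both identifications and descends to a continuous surjection $\Pi$ whose set-theoretic fiber over a class $[(b,t)]$ is $\pi^{-1}(b) \times \{t\} \cong F$.

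Next I would check local triviality. For a point $[(b,t)]$ with $t \in (0,1)$, I pick a trivializing neighborhood $U \subset B$ of $b$ for the bundle $\pi$ and an interval $(t-\epsilon, t+\epsilon) \subset (0,1)$; then $\Pi^{-1}(U \times (t-\epsilon, t+\epsilon)) \cong (U \times F) \times (t-\epsilon, t+\epsilon)$ gives the desired chart. For a ``seam'' point $[(b,0)] = [(\overline{f}^{-1}(b),1)]$, I would cover a neighborhood by two half-open slabs, $U \times [0, \epsilon)$ and $\overline{f}^{-1}(U) \times (1-\epsilon, 1]$, trivialize $Y$ over each using bundle charts for $\pi$, and invoke the fact that $f$ is a fiber-preserving homeomorphism to observe that the identification $(y,1) \sim (f(y),0)$ glues the two trivializations by a transition function built from $f$ restricted to a single fiber of $\pi$, which lies in $\mathrm{Homeo}(F)$.

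There is no serious obstacle: the construction is essentially formal, and the content of the lemma is that fiber-preservation of $f$ is exactly what is needed to pass from the iterated mapping torus construction to a genuine bundle structure. The only mild care is in organizing the coordinate patches at the seam so that the transition functions at $t=0$ are all of the above form; once this is done, $\Pi$ displays $Y \rtimes_f S^1$ as an $F$-bundle over $B \rtimes_{\overline{f}} S^1$, as claimed.
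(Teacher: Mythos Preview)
Your proof is correct and follows essentially the same approach as the paper: both construct the projection by descending $\pi\times\mathrm{id}$ to the quotient mapping tori using the relation $\pi\circ f=\overline f\circ\pi$. The paper simply asserts that the induced map is an $F$--bundle, whereas you supply the explicit local-triviality check at interior and seam points; this extra detail is fine and fills in what the paper leaves to the reader.
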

\begin{proof}
Let $p\co Y\to B$ be the fibration of $Y$. Since $f$ is fiber-preserving, we have
\begin{equation}\label{eq:pf=fp}
p\circ f=\overline f\circ p.
\end{equation}
 The mapping tori of $f,\overline f$ are
$$M=Y\times[0,1]/(x,1)\sim(f(x),0),\quad\overline M=B\times[0,1]/(y,1)\sim(\overline f(y),0).$$
Using (\ref{eq:pf=fp}), we can verify that the fibration $$p\times\mathrm{id}\co Y\times[0,1]\to B\times[0,1]$$ induces a fibration
$$M\to \overline M,$$
which is an $F$--bundle.
\end{proof}

In the rest of this section, $X^4=Y\rtimes S^1$, and $Y$ is prime. By the Geometrization Theorem, either $Y$ is geometric or $Y$ has a nontrivial JSJ decomposition. If $Y$ is geometric, namely, $Y$ supports one of the eight Thurston geometries,
then $Y$ is either covered by a torus bundle over $S^1$ with Anosov monodromy, or a Seifert fibered space, or hyperbolic. Below we will discuss these cases.

\subsection{Quotients of torus bundles}\label{subsect:TB}

If $Y$ is covered by a torus bundle over $S^1$, then any finite cover of $Y$ is also covered by a torus bundle over $S^1$. So any finite cover of $X$ is covered by a mapping torus whose fiber is a torus bundle over $S^1$.
It follows that $vb_1(X)\le4$.

\subsection{Hyperbolic manifolds}\label{subsect:hyp}

If $Y$ is hyperbolic, then the mapping class group of $Y$ is finite. So $X$ is covered by $Y\times S^1$. By Theorem~\ref{thm:vb3d} we know $vb_1(X)=\infty$.

\subsection{Seifert fibered spaces}\label{sect:SFS}

The following fact can be found in \cite{Sc}.

\begin{prop}\label{prop:SFScover}
Any Seifert fibered space is finitely covered by a circle bundle over an oriented surface.
\end{prop}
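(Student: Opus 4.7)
The plan is to reduce to the case that the base $2$-orbifold has no cone points, since a Seifert fibration over a smooth $2$-manifold is automatically an $S^1$-bundle. Write the Seifert fibration as $p\co Y\to B$, where the cone points of $B$ are the images of the exceptional fibers of $p$. A finite orbifold cover $\widetilde B\to B$ by a smooth oriented surface will produce, by pullback, a circle bundle $\widetilde Y\to\widetilde B$ with $\widetilde Y\to Y$ a finite cover, which is exactly what is required.

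The main step is to construct such a manifold cover of $B$, and for this I would invoke Selberg's lemma. When $B$ is a \emph{good} orbifold in the sense of Thurston---namely, modeled on $S^2$, $\mathbb{E}^2$, or $\mathbb{H}^2$---the orbifold fundamental group $\pi_1^{\mathrm{orb}}(B)$ is a finitely generated group of isometries of the universal cover, hence linear, and so contains a torsion-free subgroup $\Gamma$ of finite index. Since $\Gamma$ acts freely on the universal cover, the associated cover of $B$ is a smooth surface, and all cone isotropy is killed. If this surface turns out to be non-orientable, I would pass once more to its orientation double cover and pull back the circle bundle.

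The remaining case is that $B$ is a \emph{bad} orbifold, i.e., a teardrop $S^2(n)$ or a football $S^2(n,m)$ with $n\ne m$, for which no manifold cover exists. In this situation $Y$ carries the spherical geometry and is finitely covered by $S^3$, and the Hopf fibration $S^3\to S^2$ exhibits $S^3$ itself as a circle bundle over an oriented surface, so the conclusion still holds. The principal obstacle is precisely this bad-orbifold exception, which falls outside the scope of Selberg's lemma; it is resolved by direct appeal to the Hopf fibration together with the classification of Seifert fibered spaces with bad base orbifolds as spherical space forms.
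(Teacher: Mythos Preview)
Your argument is correct and is essentially the standard proof one finds in the literature. Note that the paper itself does not prove this proposition: it simply records it as a well-known fact with a reference to Scott's survey \cite{Sc}, so there is no proof in the paper to compare against.

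Two small remarks. First, the ``pullback'' of a Seifert fibration along an orbifold cover of the base is cleanest to justify via the exact sequence $1\to\langle h\rangle\to\pi_1(Y)\to\pi_1^{\mathrm{orb}}(B)\to 1$ (valid when the regular fiber $h$ has infinite order in $\pi_1(Y)$, i.e., outside the spherical case you already handle separately), or equivalently by treating the Seifert fibration as an $S^1$-bundle in the orbifold category; the naive set-theoretic fiber product requires some care at the exceptional fibers. Second, Selberg's lemma is heavier than necessary---manifold covers of good closed $2$-orbifolds can be constructed explicitly---but it is certainly a valid and efficient way to dispatch the good-orbifold case.
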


The next lemma is elementary.

\begin{lemma}\label{lem:S1S2}
If $Y=S^1\times S^2$, then $X=Y\rtimes S^1$ is covered by $S^2\times T^2$.
\end{lemma}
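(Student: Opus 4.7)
The plan is to exhibit $X$ as a finite cyclic cover of a trivial $(S^1 \times S^2)$-bundle over $S^1$, by showing that the monodromy has finite order in the mapping class group of $Y = S^1 \times S^2$.

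Write $X = Y \rtimes_{\varphi} S^1$ where $\varphi$ is orientation-preserving (since all manifolds here are oriented). First I would invoke the classical fact that the orientation-preserving mapping class group $\pi_0 \mathrm{Diff}^+(S^1 \times S^2)$ is a finite group, of order $8$. One way to see this is via Hatcher's determination of the homotopy type of $\mathrm{Diff}(S^1 \times S^2)$; a more elementary route is to note that $\varphi$ acts as $\pm 1$ on $\pi_1(Y) = \Z$, and after passing to the double cover that trivializes this action one can use the sphere theorem to isotope $\varphi$ into a fiberwise form over $S^1$, and then appeal to the finiteness of $\pi_0\mathrm{Diff}^+(S^2)$ together with $\pi_1\mathrm{Diff}^+(S^2) = \pi_1 SO(3) = \Z/2$ to bound the remaining choices.

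Let $n$ be the order of $[\varphi]$ in this finite group, so that $\varphi^n$ is isotopic to $\mathrm{id}_Y$. Applying the first cover construction from Section~\ref{Sect:MT}, the $n$-fold cyclic cover of $X$ dual to $Y$ is the mapping torus of $\varphi^n$; since $\varphi^n$ is isotopic to the identity, this mapping torus is diffeomorphic to the trivial bundle $Y \times S^1 = (S^1 \times S^2) \times S^1 = S^2 \times T^2$, as claimed. The only nontrivial ingredient is the finiteness statement about $\pi_0\mathrm{Diff}^+(S^1 \times S^2)$; everything else is formal, which explains why the authors call the lemma elementary.
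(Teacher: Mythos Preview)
Your argument is correct and follows essentially the same route as the paper: iterate the monodromy until it becomes isotopic to the identity, then identify the resulting cyclic cover with $S^2\times T^2$. One minor quibble: $\pi_0\mathrm{Diff}^+(S^1\times S^2)$ has order $4$, not $8$ (the group of order $8$ is the full mapping class group $(\Z/2)^3$, which includes orientation-reversing classes), but this does not affect your proof, which only needs finiteness.
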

\begin{proof}
After iterating the monodromy $f$ we may assume $f_*=\mathrm{id}$ on $H_2(Y)$, then $f$ is isotopic to the identity, (see, for instance, Lemma~\ref{lem:fp}). Hence $X$ is covered by $S^2\times T^2$.
\end{proof}

The following theorem is well known, see the Theorems~3.8, 3.9 and the discussion in the end of Section~3 in \cite{Sc}.

\begin{theorem}\label{thm:FiberPr}
Suppose that $M$ is a compact orientable Haken Seifert fibered space whose base has negative orbifold Euler characteristic. Then the Seifert fibration of $M$ is  unique up to isomorphism, and any homeomorphism on $M$ is isotopic
to a fiber-preserving homeomorphism.
\end{theorem}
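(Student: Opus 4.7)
The plan is to prove both conclusions from a single algebraic fact: when the base orbifold $B$ has negative orbifold Euler characteristic, the regular fiber of the Seifert fibration generates the center of $\pi_1(M)$, and is therefore canonically determined by $M$. Concretely, a regular fiber gives a central element $h \in \pi_1(M)$ fitting into the exact sequence
$$1 \to \langle h \rangle \to \pi_1(M) \to \pi_1^{\mathrm{orb}}(B) \to 1.$$
When $\chi^{\mathrm{orb}}(B)<0$, the orbifold $B$ is hyperbolic, so $\pi_1^{\mathrm{orb}}(B)$ is a non-elementary Fuchsian group; in particular it contains a non-abelian free subgroup and has trivial center. Thus $Z(\pi_1(M)) = \langle h \rangle$, and the conjugacy class of $h$ is an invariant of $\pi_1(M)$.

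For Part (1), suppose $M$ has another Seifert fibration with regular fiber $h'$. The argument above gives $\langle h' \rangle = Z(\pi_1(M)) = \langle h \rangle$, so a regular fiber of the second fibration is freely homotopic to $(h)^{\pm 1}$. Since $M$ is Haken and irreducible, homotopy implies isotopy for essential simple closed curves, so after an ambient isotopy the two fibrations share a common regular fiber circle $K$. One then uses uniqueness of Seifert-fibered regular neighborhoods to match the fibrations on a neighborhood of $K$, and extends across $M \setminus K$ via an inductive argument along a hierarchy of vertical incompressible surfaces (vertical tori and annuli dual to arcs on $B$ missing the cone points); at each step the pieces are fibered solid tori or fibered $I$-bundles over surfaces, where the Seifert structure is known to be unique up to isomorphism.

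For Part (2), given $\phi\co M \to M$, the induced automorphism $\phi_\ast$ preserves the center, so $\phi_\ast(h) = h^{\pm 1}$. Pick a regular fiber $K$ representing $h$; as in Part (1), after an isotopy we can assume $\phi(K)=K$. A fibered-neighborhood argument lets us isotope $\phi$ to be fiber-preserving on a regular neighborhood $N(K)$. Cutting $M$ open along a Seifert-fibered hierarchy of vertical annuli and tori reduces the extension problem to fiber-preserving isotopies on pieces that are fibered solid tori (around exceptional fibers) and $S^1$-bundles over surfaces with boundary. On each such piece the result follows from the classical analysis of self-homeomorphisms of fibered solid tori and from Waldhausen's theorem that a homotopy equivalence of Haken manifolds rel boundary is isotopic to a homeomorphism rel boundary.

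The main obstacle is the inductive step across the hierarchy: one must verify that the fiber-preserving isotopies constructed on the individual pieces can be chosen to match on the cutting surfaces, which requires careful control of the isotopy on vertical annuli and tori, and a delicate treatment near exceptional fibers where a fiber-preserving map of a fibered solid torus is rigid only up to a finite ambiguity. Waldhausen's rigidity for Haken manifolds, together with the hypothesis $\chi^{\mathrm{orb}}(B)<0$ (which prevents the fiber structure from ever being absorbed into an $S^1$-action on a larger piece), is what makes the inductive matching go through and completes the proof.
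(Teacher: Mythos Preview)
The paper does not prove this theorem at all; it is quoted as a well-known result with a reference to Scott's survey~\cite{Sc} (specifically Theorems~3.8, 3.9 and the end of Section~3 there). So there is no ``paper's own proof'' to compare against, and your sketch should be measured against the standard arguments of Waldhausen and Scott.

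Your key algebraic observation is exactly the right one: when $\chi^{\mathrm{orb}}(B)<0$ the orbifold fundamental group is centerless, so the regular fiber generates $Z(\pi_1(M))$ and is therefore intrinsic to $M$. This is the heart of every proof of this result.

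However, the geometric steps you build on top of that observation contain gaps. First, the assertion ``$M$ Haken and irreducible $\Rightarrow$ homotopy implies isotopy for essential simple closed curves'' is not a general theorem; freely homotopic embedded circles in a $3$--manifold need not be ambiently isotopic without further argument. In this particular setting one can prove that any embedded curve representing a generator of the center is isotopic to a regular fiber, but that requires work (e.g.\ lifting to $\widetilde M\cong \mathbb H^2\times\mathbb R$ and analyzing the action), not a blanket citation. Second, your inductive hierarchy argument is circular as written: you propose to cut along ``vertical'' tori and annuli, but ``vertical'' is defined relative to a fibration, and you have two fibrations whose equality you are trying to establish. You would need to first prove that every essential torus or annulus in $M$ is isotopic into a vertical one with respect to \emph{each} fibration separately --- and that is essentially Waldhausen's classification of incompressible surfaces in large Seifert fibered spaces, which is the real engine of the standard proof. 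Once that classification is in hand, both uniqueness and the fiber-preserving isotopy follow rather quickly, without the curve-by-curve matching you outline.
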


\begin{prop}\label{prop:CovByBundle}
Suppose that $Y$ is an orientable Seifert fibered space over an orbifold $\mathcal B$. Let $\chi_{\mathrm{orb}}(\mathcal B)$ be the orbifold Euler characteristic of $\mathcal B$. Suppose that $f\co Y\to Y$ is an orientation preserving homeomorphism, $X=Y\rtimes_fS^1$.
Then there are three cases:\newline
(1) If $\chi_{\mathrm{orb}}(\mathcal B)>0$, then $vb_1(X)=1$ or $2$;
\newline(2) If $\chi_{\mathrm{orb}}(\mathcal B)=0$, then $vb_1(X)\le4$;
\newline(3) If $\chi_{\mathrm{orb}}(\mathcal B)<0$, then $vb_1(X)=\infty$.
\end{prop}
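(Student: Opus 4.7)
The plan is to treat the three cases separately; cases (1) and (2) are quick consequences of the structure theory of Seifert fibered $3$-manifolds, while case (3) is the substantive one.

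For case (1), a prime orientable Seifert fibered space with $\chi_{\mathrm{orb}}(\mathcal B) > 0$ is either spherical or $S^1 \times S^2$. If $Y$ is spherical then $\pi_1(Y)$ is finite, so the mapping torus sequence $1 \to \pi_1(Y) \to \pi_1(X) \to \mathbb Z \to 1$ shows $\pi_1(X)$ is virtually $\mathbb Z$ and every finite cover of $X$ has $b_1 = 1$, giving $vb_1(X) = 1$. If $Y = S^1 \times S^2$ then $\pi_1(X)$ is virtually $\mathbb Z^2$, so every cover has $b_1 \leq 2$, while Lemma~\ref{lem:S1S2} attains $2$. For case (2), a Seifert fibered space with $\chi_{\mathrm{orb}}(\mathcal B) = 0$ carries Euclidean or Nil geometry and hence is finitely covered by a $T^2$-bundle over $S^1$; Section~\ref{subsect:TB} then gives $vb_1(X) \leq 4$.

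For case (3), the plan is to reduce to Theorem~\ref{thm:vb3d} via a circle bundle structure on a cover. First, Proposition~\ref{prop:SFScover} provides a finite cover $\widetilde Y \to Y$ that is an $S^1$-bundle over a closed oriented surface $\Sigma$; since $\chi_{\mathrm{orb}}(\mathcal B) < 0$ we must have $g(\Sigma) \geq 2$. Lemma~\ref{lem:FindCov} then supplies a corresponding finite cover $\widetilde X \to X$ of the form $\widetilde Y \rtimes_g S^1$ for some monodromy $g$. The Gysin sequence of $\widetilde Y \to \Sigma$ gives $b_1(\widetilde Y) \geq 2g(\Sigma) - 1 \geq 3$, so $\widetilde Y$ is Haken; as $\Sigma$ has negative Euler characteristic, Theorem~\ref{thm:FiberPr} applies and, after isotopy, I may take $g$ to be fiber-preserving. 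Lemma~\ref{lem:FPBundle} then exhibits $\widetilde X$ as an $S^1$-bundle over the $3$-manifold $N = \Sigma \rtimes_{\bar g} S^1$.

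To finish I would observe that $N$ is an irreducible $3$-manifold fibering over $S^1$ with fiber of genus $\geq 2$; it is not spherical, and it cannot be finitely covered by any $T^2$-bundle over $S^1$ because such a bundle has solvable $\pi_1$ while $\pi_1(N)$ contains the non-solvable surface group $\pi_1(\Sigma)$. Theorem~\ref{thm:vb3d} therefore yields $vb_1(N) = \infty$, producing finite covers $N' \to N$ with $b_1(N')$ arbitrarily large. Pulling back the $S^1$-bundle $\widetilde X \to N$ along each such $N'$ produces a finite cover of $\widetilde X$ (and hence of $X$) whose $b_1$ is at least $b_1(N')$ by the Gysin sequence, so $vb_1(X) = \infty$. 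The main obstacle is the reduction to a fiber-preserving monodromy: Theorem~\ref{thm:FiberPr} requires the Haken hypothesis, which is what forces the initial passage from $Y$ to the circle-bundle cover $\widetilde Y$ before Lemma~\ref{lem:FPBundle} can be invoked.
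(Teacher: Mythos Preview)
Your argument follows the paper's proof essentially step for step: the same dichotomy in case~(1), the same reduction to Subsection~\ref{subsect:TB} in case~(2), and in case~(3) the same chain of Proposition~\ref{prop:SFScover} $+$ Lemma~\ref{lem:FindCov} $\to$ Theorem~\ref{thm:FiberPr} $\to$ Lemma~\ref{lem:FPBundle} $\to$ Theorem~\ref{thm:vb3d}, with the Gysin sequence supplying the details the paper leaves implicit. The only minor slip is your phrase ``a \emph{prime} orientable Seifert fibered space with $\chi_{\mathrm{orb}}(\mathcal B)>0$'': the proposition as stated does not assume primeness, and $\mathbb{RP}^3\#\mathbb{RP}^3$ is a non-prime Seifert fibered space in this class --- but it is covered by $S^1\times S^2$, so your $S^1\times S^2$ argument (together with Lemma~\ref{lem:FindCov}) handles it without change.
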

\begin{proof}
If $\chi_{\mathrm{orb}}(\mathcal B)>0$, then $Y$ is covered by $S^1\times S^2$ or $S^3$. If $Y$ is covered by $S^1\times S^2$, Lemma~\ref{lem:S1S2} implies that $X$ is covered by $S^2\times T^2$, so $vb_1(X)=2$. If $Y$ is covered by $S^3$, then $vb_1(X)=1$. 

If $\chi_{\mathrm{orb}}(\mathcal B)=0$, then $Y$ is finitely covered by a torus bundle over $S^1$. So $X$ is covered by an iterated torus bundle. The discussion in Subsection~\ref{subsect:TB} shows that $vb_1(X)\le4$.

Now we consider the case $\chi_{\mathrm{orb}}(\mathcal B)<0$.
By Lemma~\ref{lem:FindCov} and Proposition~\ref{prop:SFScover}, we may assume $Y$ is a circle bundle over an oriented surface $B$ with negative Euler characteristic.
By Theorem~\ref{thm:FiberPr}, $f$ is isotopic to a fiber-preserving homeomorphism. Let $\overline f\co B\to B$ be the map on $B$ induced by $f$.
  Lemma~\ref{lem:FPBundle} then implies that a finite cover of $X$ is a circle bundle over $\overline{X}$, the mapping torus of $\overline f$. Since $\chi(B)<0$, $vb_1(\overline X)=\infty$ by Theorem~\ref{thm:vb3d}. So $vb_1(X)=\infty$.
\end{proof}

\subsection{Irreducible manifolds with nontrivial JSJ decomposition}\label{Sect:JSJ}

Throughout this subsection, $Y$ is an orientable irreducible manifold with nontrivial JSJ decomposition, $f\co Y\to Y$ is an orientation preserving homeomorphism,  $X=Y\rtimes_fS^1$.

\begin{theorem}\label{thm:vbJSJ}
$Y,f,X$ are as above, then $vb_1(X)=\infty$.
\end{theorem}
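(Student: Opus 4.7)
By uniqueness of the JSJ decomposition, $f$ permutes the finitely many JSJ pieces and JSJ tori of $Y$, so after replacing $X$ by a finite cyclic cover dual to $Y$ (equivalently, replacing $f$ by a power) we may assume $f$ preserves each JSJ piece and each JSJ torus setwise. Mostow rigidity implies the mapping class group of each hyperbolic JSJ piece is finite, and Theorem~\ref{thm:FiberPr} lets us isotope $f$ to a fiber-preserving map on each Seifert piece with hyperbolic base. Passing to a further power of $f$, we may assume $f$ is isotopic to the identity on each hyperbolic JSJ piece and fiber-preservingly trivial on each Seifert base orbifold.

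\textbf{Building equivariant covers.} The plan is to produce a sequence of finite covers $\widetilde Y \to Y$ with $b_1(\widetilde Y)\to\infty$ on which a power of $f$ lifts. If $Y$ has a hyperbolic JSJ piece $Y_0$, then by the subgroup separability (LERF) of $\pi_1(Y)$ for the hyperbolic/mixed case, every finite-index subgroup of $\pi_1(Y_0)$ is the intersection of $\pi_1(Y_0)$ with some finite-index subgroup of $\pi_1(Y)$. Choosing a cover $\widetilde Y_0\to Y_0$ with $b_1(\widetilde Y_0)$ arbitrarily large (which exists by Theorem~\ref{thm:vb3d}) and extending to a cover $\widetilde Y \to Y$ produces the desired tower; the resulting cover is automatically invariant under a suitable power of $f$ since $f|_{Y_0}\simeq\mathrm{id}$. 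If $Y$ is a graph manifold (no hyperbolic pieces) we instead apply Kojima's construction equivariantly: iterate $f$ so that it fixes a class $\alpha\in H^1(Y;\F_p)$ dual to a JSJ torus, which is possible because $H^1(Y;\F_p)$ is a finite-dimensional $\F_p$-vector space on which $f^*$ acts, then take the associated $p$-cyclic cover and iterate.

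\textbf{Finite-order monodromy action and conclusion.} In both cases, the new homology classes of $\widetilde Y$ come from regions (lifts of $Y_0$ in case A, preimages of JSJ pieces in case B) on which a power of $f$ acts by a deck transformation of the subcover up to isotopy. Since the deck group is finite, a further iterate $\widetilde f^m$ acts as the identity on a subspace $V\subseteq H_1(\widetilde Y;\Q)$ whose dimension can be made at least any prescribed $N$. By Lemma~\ref{lem:FindCov} this yields a finite cover $\widetilde X\to X$ fibering over $S^1$ with fiber $\widetilde Y$ and monodromy $\widetilde f^m$, and the Wang exact sequence gives
\begin{equation*}
b_1(\widetilde X) \;=\; 1+\dim_{\Q}\ker\bigl(\widetilde f^m_* - I\colon H_1(\widetilde Y;\Q)\to H_1(\widetilde Y;\Q)\bigr)\;\ge\; 1+N,
\end{equation*}
so $vb_1(X)=\infty$. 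The most delicate step I foresee is the graph manifold case: a Dehn twist along a JSJ torus acts on $H_1(Y;\Q)$ unipotently but generally with infinite order, so one cannot expect $\widetilde f_*$ itself to be of finite order; the proof succeeds only because the Kojima-type cyclic covers are chosen so that these twists become trivial (or become deck transformations) after pullback to $\widetilde Y$.
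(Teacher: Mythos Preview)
Your reduction step overreaches: the map that $f$ induces on the base orbifold of a Seifert JSJ piece need not become the identity after any iteration---it can be pseudo-Anosov on a hyperbolic surface with boundary---so you cannot assume $f$ is ``fiber-preservingly trivial on each Seifert base.'' More seriously, the core of your argument is incomplete. In Case~A you need the image of $H_1(\widetilde Y_0;\Q)\to H_1(\widetilde Y;\Q)$ to be large, but you never compare $b_1(\widetilde Y_0)$ to the number of boundary tori of $\widetilde Y_0$ (the kernel of that map lies in the image of $H_1(\partial\widetilde Y_0)$), nor is the cover-extension step you invoke the usual statement of LERF. In Case~B your own final paragraph concedes the gap: Dehn twists along JSJ tori act unipotently with infinite order on $H_1$ of generic covers, and you give no mechanism by which Kojima-type cyclic covers neutralize them. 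Saying ``the proof succeeds only because'' such covers happen to be chosen well is not a proof.

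The paper bypasses all of this with a cleaner idea: it never tries to control $\widetilde f_*$ on $H_1(\widetilde Y)$. Instead it proves (Lemma~\ref{lem:TorusId}) that after a suitable cover and iteration, $f|_{T_j}$ is isotopic to the identity on each JSJ torus $T_j$: if $T_j$ borders a hyperbolic piece this is Mostow, and if both sides are Seifert then the two non-parallel regular fibers on $T_j$ are each preserved with orientation, forcing $f_*=\mathrm{id}$ on $H_1(T_j)$. Consequently the mapping torus $R_j$ of $f|_{T_j}$ is $T^3\subset X$. Kojima's covering lemma, applied to $Y$ and transported via Lemma~\ref{lem:FindCov}, then yields a cover $\widetilde X$ containing two lifts $R_j',R_j''\cong T^3$ with connected complement; the $n$-fold cyclic cover of $\widetilde X$ dual to $R_j'$ has $b_1\ge n+1$ by a direct intersection-pairing count. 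The Wang sequence and the action of $\widetilde f_*$ on $H_1(\widetilde Y)$ play no role.
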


Before we proceed, we remark that by Mostow's Rigidity, any homeomorphism of a complete hyperbolic $3$--manifold with finite volume is isotopic to a periodic map, namely, some iteration of this homeomorphism is isotopic to the
identity map.

By the standard JSJ theory, $f$ can be isotoped to send each JSJ piece to a JSJ piece. After iterating $f$, we may assume $f$ satisfies the following
\begin{condition}\label{cond:PrJSJ}
The monodromy $f$ sends each JSJ piece and each JSJ torus to itself, and the restriction of $f$ to each hyperbolic piece is the identity.
\end{condition}

\begin{lemma}\label{lem:OrBase}
$X$ is finitely covered by a manifold $\widetilde X=\widetilde Y\rtimes S^1$, such that each JSJ piece of $\widetilde Y$ is either hyperbolic or a Seifert fibered space over an orientable orbifold with negative Euler characteristic.
\end{lemma}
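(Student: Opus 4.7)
The plan is to construct the cover $\widetilde Y$ of $Y$ first; then Lemma~\ref{lem:FindCov} automatically produces a cover $\widetilde X$ of $X$ that is the mapping torus $\widetilde Y\rtimes S^1$. So the lemma reduces to a purely $3$-dimensional statement: find a finite cover $\widetilde Y$ of $Y$ whose JSJ decomposition has only hyperbolic pieces and Seifert pieces over orientable orbifolds of negative Euler characteristic.

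For each Seifert fibered JSJ piece $M_i$ of $Y$, which has nonempty boundary since the JSJ decomposition is nontrivial, Proposition~\ref{prop:SFScover} produces a finite cover of $M_i$ that is a circle bundle over an oriented surface $B_i$. If $\chi(B_i)\ge 0$, I would pass to a further cover: the Euler characteristic of the base is multiplied by the covering degree, so one can force $\chi(B_i)<0$ unless the only available covers have $\chi=0$, in which case $M_i$ is itself finitely covered by $T^2\times I$. A lifted piece that is a copy of $T^2\times I$ fuses with its neighbors across the lifted JSJ tori and so does not appear as a JSJ piece of the cover. Hyperbolic JSJ pieces of $Y$ automatically lift to hyperbolic pieces and require no modification.

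It remains to realize these local covers as the restriction of a single finite cover $\widetilde Y\to Y$. For this one invokes that $\pi_1(Y)$ is subgroup separable (LERF), a property that holds for all $3$-manifolds with geometric JSJ pieces by the combined work of Scott on Seifert pieces, Agol and Wise on hyperbolic pieces, and Przytycki--Wise on mixed $3$-manifolds. Each local choice corresponds to a finite-index subgroup $H_i\le \pi_1(M_i)\le \pi_1(Y)$, and LERF (applied finitely many times) produces a finite-index subgroup $N\le \pi_1(Y)$ with $N\cap \pi_1(M_i)\subseteq H_i$ for every $i$. The cover of $Y$ associated to $N$ is then the desired $\widetilde Y$, and Lemma~\ref{lem:FindCov} finishes.

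The main obstacle is the LERF gluing step, since purely local choices need not assemble into a global cover; the rest is a case analysis of the small Seifert bases with $\chi_{\mathrm{orb}}\ge 0$ together with the two cited lemmas.
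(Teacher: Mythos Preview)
Your reduction to a purely $3$-dimensional statement via Lemma~\ref{lem:FindCov} is exactly what the paper does; the paper then simply quotes \cite[Theorem~2.6]{Luecke} for the existence of the cover $\widetilde Y$, rather than constructing it by hand.

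Your attempt to reconstruct the $3$-dimensional cover, however, has a genuine gap in the gluing step. You assert that $\pi_1(Y)$ is LERF ``for all $3$-manifolds with geometric JSJ pieces,'' but this is false: there are graph manifolds (nontrivial JSJ decomposition with only Seifert fibered pieces) whose fundamental groups are \emph{not} subgroup separable. The Przytycki--Wise result you cite applies to \emph{mixed} manifolds, i.e.\ those with at least one hyperbolic piece; it says nothing about pure graph manifolds, and indeed LERF can fail there. Since Lemma~\ref{lem:OrBase} is stated for an arbitrary irreducible $Y$ with nontrivial JSJ decomposition, your argument does not cover this case.

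What is actually needed is much weaker than LERF: one only has to extend finite-index subgroups of the vertex groups to a finite-index subgroup of $\pi_1(Y)$, compatibly across the JSJ tori. This is precisely what Luecke does in \cite{Luecke}, using direct graph-of-groups arguments (matching the induced covers on the boundary tori) that predate and do not require the Agol--Wise machinery. If you want to avoid citing Luecke, you should replace the LERF appeal with such a hands-on construction; otherwise the cleanest fix is simply to invoke \cite[Theorem~2.6]{Luecke} for the cover $\widetilde Y$, as the paper does. Your side remarks about $T^2\times I$ lifts fusing with neighbors are essentially correct but would also need to be made precise once the global cover exists.
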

\begin{proof}
By \cite[Theorem~2.6]{Luecke}, $Y$ is finitely covered by a manifold $\widetilde Y$ as in the statement of the lemma. Our conclusion follows from Lemma~\ref{lem:FindCov}.
\end{proof}

Now we can work with $\widetilde Y$ instead of $Y$. Iterating $f$ again to ensure that it satisfies Condition~\ref{cond:PrJSJ} and $f$ induces an orientation preserving map on the base of each Seifert fibered piece. Let  $T_1,\dots,T_e$ be the JSJ tori. 

\begin{lemma}\label{lem:TorusId}
The restriction of $f$ to each JSJ torus $T_j$ is isotopic to the identity. So the mapping torus of $f|T_j$ is $T^3$.
\end{lemma}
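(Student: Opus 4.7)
The plan is to show that $(f|_{T_j})_*$ acts as the identity on $H_1(T_j;\Z)\cong\Z^2$; a self-homeomorphism of $T^2$ that is the identity on homology is isotopic to the identity, and the mapping torus of $\mathrm{id}_{T^2}$ is $T^3$. As a preliminary step I would iterate $f$ once more if necessary (replacing $X$ by a further cyclic cover, which preserves all earlier arrangements) to ensure that $f$ restricts to an orientation-preserving self-homeomorphism of each $T_j$, equivalently that $f$ preserves each local side of every JSJ torus.

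Fix $T_j$, and let $N^+,N^-$ denote the JSJ pieces on its two sides (possibly equal as a piece, but appearing with distinct boundary components in the cut-open picture). If either $N^\pm$ is hyperbolic, Condition~\ref{cond:PrJSJ} gives $f|_{N^\pm}=\mathrm{id}$ and hence $f|_{T_j}=\mathrm{id}$ outright. Otherwise both $N^\pm$ are Seifert, and by Lemma~\ref{lem:OrBase} each has orientable base orbifold of negative Euler characteristic, so Theorem~\ref{thm:FiberPr} lets me isotope $f|_{N^\pm}$ to a fiber-preserving homeomorphism. Consequently $f_*$ on $\pi_1(N^\pm)$ preserves the center, an infinite cyclic subgroup generated by the regular fiber class $h^\pm$. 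Combining the fact that $f$ preserves the orientation of $Y$ (hence of $N^\pm$) with the standing assumption that $f$ induces an orientation-preserving map on each base orbifold, $f_*$ also preserves the orientation of the fiber, so it fixes $h^\pm$ itself and not merely $\langle h^\pm\rangle$.

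The inclusion $\pi_1(T_j)\hookrightarrow\pi_1(N^\pm)$ is injective and sends a primitive element $s^\pm\in\pi_1(T_j)$—the fiber slope of $N^\pm$ on $T_j$—onto $h^\pm$, so $(f|_{T_j})_*$ fixes each $s^\pm$. By minimality of the JSJ decomposition $s^+$ and $s^-$ must be linearly independent in $H_1(T_j;\Q)$, for otherwise the two Seifert fibrations would match along $T_j$ and could be glued to contradict that $T_j$ is a JSJ torus. Hence $(f|_{T_j})_*$ fixes two linearly independent elements of $H_1(T_j;\Z)$ and must be the identity, completing the proof. The main subtlety I anticipate is the orientation bookkeeping needed to upgrade ``$f_*$ preserves the fiber subgroup'' to ``$f_*$ fixes the fiber class,'' together with handling the case where both sides of $T_j$ lie in the same Seifert piece after cutting; both points are absorbed into the extra iteration of $f$ and the hypothesis that the induced map on each base is orientation-preserving.
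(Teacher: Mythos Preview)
Your proposal is correct and follows essentially the same approach as the paper: split into the case where one side of $T_j$ is hyperbolic (where Condition~\ref{cond:PrJSJ} gives $f|_{T_j}=\mathrm{id}$ directly) and the case where both sides are Seifert, and in the latter use that $f$ fixes the two distinct fiber slopes on $T_j$, which are linearly independent in $H_1(T_j;\Q)$ by minimality of the JSJ decomposition, forcing $(f|_{T_j})_*=\mathrm{id}$. Your extra care with the orientation bookkeeping and with the possibility that the two sides of $T_j$ lie in the same piece is a mild elaboration of the paper's argument rather than a different route.
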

\begin{proof}
If a JSJ torus $T_j$ is adjacent to a hyperbolic piece, then it follows from Condition~\ref{cond:PrJSJ} that $f|T_j$ is isotopic to the identity. If both sides of $T_j$ are Seifert fibered pieces, then
the Seifert fibers from the two sides are not parallel on $T_j$, otherwise we could glue the Seifert fibrations on the two sides together hence $T_j$ would not be a JSJ torus. Since $f$ is orientation preserving and the induced map on the base is
also orientation preserving, $f$ preserves the orientation of the Seifert fibers. Since two Seifert fibers from two sides of $T_j$ are linearly independent in $H_1(T_j;\Q)$, $f$ induces the identity on $H_1(T_j;\Q)$, hence is isotopic to the identity.
\end{proof}

Let $R_j$ be the mapping torus of $f|T_j$, $R=\cup R_j$. Suppose $R_j\subset \partial X_i$.

\begin{lemma}\label{lem:TwoR}
There exists a finite cover $\rho:\widetilde X\to X$, so that there are two components $R_j',R_j''$ of $\rho^{-1}(R_j)$, such that $\widetilde X-(R_j'\cup R_j'')$ is connected.
\end{lemma}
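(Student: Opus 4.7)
The plan is to reduce the problem to a three-dimensional statement about $Y$ and then lift to $X$ via Lemma~\ref{lem:FindCov}. Specifically, I first construct a finite cover $\rho_Y \colon \widetilde Y \to Y$ together with a pair of components $T_j', T_j''$ of $\rho_Y^{-1}(T_j)$ such that $\widetilde Y \setminus (T_j' \cup T_j'')$ is connected. Once such a $\widetilde Y$ is in hand, I replace $f$ by a sufficiently high iterate $f^N$ which setwise preserves each component of $\rho_Y^{-1}(T_j)$, and then Lemma~\ref{lem:FindCov} produces a finite cover $\widetilde X = \widetilde Y \rtimes_{f^N} S^1$ of $X$. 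The preimage of $R_j$ in $\widetilde X$ is the disjoint union of the mapping tori of $f^N$ restricted to each component of $\rho_Y^{-1}(T_j)$; taking $R_j', R_j''$ to be the mapping tori over $T_j'$ and $T_j''$ respectively gives
$$\widetilde X \setminus (R_j' \cup R_j'') \;=\; \bigl(\widetilde Y \setminus (T_j' \cup T_j'')\bigr) \rtimes_{f^N} S^1,$$
which is connected by the choice of $\widetilde Y$.

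\textbf{The three-dimensional construction.} The goal in the fiber is to build a cover of $Y$ whose dual graph of JSJ pieces contains a \emph{theta} subgraph: two vertices joined by three parallel edges, all of which are lifts of $T_j$. For this I use subgroup separability of the peripheral subgroup $\pi_1(T_j)$ in the fundamental group of each JSJ piece adjacent to $T_j$, which holds in full generality thanks to the Agol--Wise theorem for hyperbolic pieces and classical covering-space theory for Seifert-fibered pieces over orientable bases of negative Euler characteristic. Applying this to each JSJ piece $Y^i$ adjacent to $T_j$, I produce a finite cover $\widetilde Y^i \to Y^i$ in which $T_j$ is covered trivially by at least three disjoint boundary copies; after passing to a common refinement so that these trivial copies match across the two sides, I glue them to obtain a finite cover $\widetilde Y \to Y$. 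By construction the dual graph of $\widetilde Y$ contains the theta subgraph, and removing any two of its three parallel edges leaves a single edge connecting the two vertices, so $\widetilde Y$ minus the corresponding two tori remains connected.

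\textbf{Main obstacle.} The principal difficulty is producing, for each adjacent JSJ piece $Y^i$, a finite cover in which the boundary torus $T_j$ lifts trivially to at least three copies. Algebraically this requires a surjection $\pi_1(Y^i) \twoheadrightarrow \mathbb{Z}/3$ (or some finite group of order at least three) which vanishes on $\pi_1(T_j)$, and such a surjection may fail to exist outright if the image of $H_1(T_j) \to H_1(Y^i)$ is too large relative to $H_1(Y^i;\mathbb{Z}/3)$. The remedy is to first pass to a deeper finite cover of $Y^i$ in which the first Betti number has grown sufficiently, using that each JSJ piece has infinite virtual first Betti number (via Agol--Wise in the hyperbolic case and the classical theory of Seifert-fibered pieces in the remaining case), and then use separability to extend the cover compatibly across the gluing. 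Matching the three-fold structure simultaneously on both sides of $T_j$ and then reconciling the resulting cover with the monodromy via a further application of Lemma~\ref{lem:FindCov} is the technical heart of the argument.
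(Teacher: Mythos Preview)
Your high-level strategy --- reduce to a $3$--dimensional statement about a finite cover $\widetilde Y\to Y$ in which two lifts $T_j',T_j''$ of $T_j$ have connected complement, then invoke Lemma~\ref{lem:FindCov} and iterate the lifted monodromy so that it fixes each component of the preimage of $T_j$ --- is exactly what the paper does. The paper's proof is two lines because it does not attempt the $3$--dimensional construction at all: it simply cites \cite[Propositions~5 and~7]{Ko} for the existence of such a $\widetilde Y$, and then proceeds as you describe.

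Your attempt to build $\widetilde Y$ by hand is where the proposal becomes a sketch rather than a proof. Two concrete gaps: (i) you implicitly assume $T_j$ has two \emph{distinct} adjacent pieces, but $T_j$ may be a loop in the JSJ graph (both sides the same piece), in which case the theta-graph picture needs modification; (ii) more seriously, after producing covers of the pieces adjacent to $T_j$ you write ``glue them to obtain a finite cover $\widetilde Y\to Y$,'' but the other boundary tori of those pieces need not lift trivially, and extending to a consistent finite cover of \emph{all} of $Y$ is precisely the delicate gluing argument that Kojima (and Luecke) carry out. Your ``Main obstacle'' paragraph acknowledges this but does not resolve it. Also, invoking Agol--Wise for peripheral separability is massive overkill here: Kojima's 1987 argument needs only that each piece has finite covers with first Betti number as large as one likes, which for finite-volume cusped hyperbolic pieces was classical long before virtual fibering. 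In short, your outline is correct and aligned with the paper, but the paper outsources the genuine work to \cite{Ko}; if you want a self-contained argument you must actually carry out the Kojima--Luecke gluing rather than gesture at it.
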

\begin{proof}
By \cite[Propositions 5 and 7]{Ko}, there exists a finite cover $\pi\co\widetilde Y\to Y$, so that there are two components $T_j',T_j''$ of $\pi^{-1}(T_j)$, such that $\widetilde Y-(T_j'\cup T_j'')$ is connected.
By Lemma~\ref{lem:FindCov}, $X$ is covered by a mapping torus $\widetilde X=\widetilde Y\rtimes_{\widetilde f} S^1$. Iterating the monodromy $\widetilde f$ if necessary, we may assume $\widetilde f(T_j')=T_j', \widetilde f(T_j'')=T_j''$. Let $R'_j,R''_j$ be the mapping tori of $\widetilde f|T_j', \widetilde f|T_j''$, then $R'_j,R''_j$ are components of the preimage of $R_j$, and $\widetilde X-(R_j'\cup R_j'')$, being $(\widetilde Y-(T_j'\cup T_j''))\rtimes S^1$, is connected.
\end{proof}

Theorem~\ref{thm:vbJSJ} easily follows from Lemma~\ref{lem:TwoR}. In fact, the $n$--fold cyclic cover of $\widetilde X$ with respect to $R_j'$ will have $b_1\ge n+1$.

\subsection{Proof of Theorem ~\ref{thm:vb}}
If $Y$ is prime, then the Geometrization Theorem shows that either $Y$ is geometric or $Y$ has a nontrivial JSJ decomposition.
Now Theorem~\ref{thm:vb} follows from 
Theorem~\ref{thm:vbJSJ}, 
Propositions~\ref {prop:CovByBundle}, and the discussions in Subsection~\ref{subsect:TB} and Subsection~\ref{subsect:hyp}.

\section{Symplectic mapping tori}\label{Sect:Kod}

\subsection{Constructing symplectic structures}

We begin with a  construction of symplectic structures on a general class of mapping tori. 

\begin{definition}
Let $f\co Y\to Y$ be an orientation preserving homeomorphism of a closed, oriented, connected 3-manifold $Y$. We say that the pair $(Y, f)$ is {\it fibered} if
$Y$ admits a fibration over the circle such that $f$ preserves the homology class of the fiber.  
\end{definition}

\begin{lemma}\label{lem:fp} 
If $(Y, f)$ is fibered, then $f$ is isotopic to a fiber preserving map that preserves the orientation of the fibers.
 \end{lemma}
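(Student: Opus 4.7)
The plan is to reduce the statement to the assertion that any two fibrations $Y\to S^1$ representing the same integral cohomology class are isotopic through fibrations, and then use this to straighten $f$.

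First, I would translate the hypothesis into cohomological terms. Let $\alpha=p^*([d\theta])\in H^1(Y;\Z)$ be the Poincar\'e dual of $[F]$, where $p\co Y\to S^1$ is the given fibration. Because $f$ is orientation-preserving and $f_*[F]=[F]$, we have $f^*\alpha=\alpha$. Therefore $p\circ f$ and $p$ represent the same class in $[Y,S^1]=H^1(Y;\Z)$ and are in particular homotopic as maps to the circle.

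Second, I would invoke the fact that within a fixed integer cohomology class, any two $S^1$-valued fibrations of $Y$ differ by an ambient isotopy of $Y$. Thurston's theory of norm-minimizing surfaces implies that, in a fibered class, the fiber is the unique Thurston-norm-minimizing surface up to isotopy; extending an isotopy between the fibers to an ambient isotopy is standard using the mapping-torus product structure in a neighborhood of a fiber. Applied to $p$ and $p\circ f$, this yields a diffeomorphism $\phi\co Y\to Y$ isotopic to the identity with $p\circ\phi=p\circ f$. Setting $g:=\phi^{-1}\circ f$ produces a fiber-preserving map isotopic to $f$.

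Third, I would verify that $g$ preserves the orientation of fibers. Since $f$ and $\phi^{-1}$ are orientation-preserving, so is $g$, and $g$ covers the identity on $S^1$ (it does so with the positive sign because $f^*\alpha=+\alpha$). Splitting the orientation of $Y$ as the product of the fiber orientation and the base orientation, $g$ must preserve the orientation of each fiber.

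The main obstacle is the uniqueness-up-to-isotopy of $S^1$-fibrations in a fixed integral cohomology class; everything else is formal manipulation. If a more elementary route is preferred, one can pass to the infinite cyclic cover $\widetilde Y\cong F\times\R$ of $Y$ associated to $\alpha$, lift $f$ to a deck-equivariant homeomorphism $\widetilde f$ of $F\times\R$ using $f^*\alpha=\alpha$, and then straighten $\widetilde f$ in the $\R$-direction by a $\Z$-equivariant isotopy that descends to an isotopy of $f$ on $Y$ to a fiber-preserving map.
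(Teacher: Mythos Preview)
Your argument is correct and follows essentially the same route as the paper's: both hinge on the fact that $f(F)$ and $F$ (equivalently, the fibrations $p\circ f$ and $p$) lie in the same class and are therefore ambiently isotopic, after which extending to a fiber-preserving map and checking fiber orientations is routine. One small slip: from $p\circ\phi=p\circ f$ the map that satisfies $p\circ g=p$ is $g=f\circ\phi^{-1}$, not $g=\phi^{-1}\circ f$; the paper's proof is the same idea, phrased more tersely by first isotoping the single surface $f(F)$ onto $F$ and then using the product structure on the complement.
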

\begin{proof} Suppose $(Y, f)$ is fibered with respect to a fibration $p\co Y\to S^1$ with $F$ as a fiber. 
 Since $f_*([F])=[F]$, there is an ambient isotopy of $Y$ which takes $f(F)$ to $F$. Hence $f$ can be isotoped so that $f(F)=F$. A further isotopy will make $f$ a fiber-preserving map with respect to the fibration $p$ and $f$ preserves the orientation of the fibers.
 \end{proof}

\begin{prop}\label{prop:MTSympl}
Every mapping torus $X$  with $(Y, f)$ fibered is symplectic.
\end{prop}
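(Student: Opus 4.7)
My plan is to realize $X$ as a surface bundle over $T^2$ and apply Thurston's construction of symplectic forms on surface bundles over surfaces. First, Lemma $\ref{lem:fp}$ lets me isotope $f$ to a fiber-preserving homeomorphism with respect to some fibration $p \co Y \to S^1$ with fiber $F$, preserving fiber orientations. Since $f$ preserves the orientation of $Y$, the induced map $\bar f \co S^1 \to S^1$ on the base is orientation-preserving and hence isotopic to the identity; consequently Lemma $\ref{lem:FPBundle}$ identifies $X$ with an $F$-bundle $\pi \co X \to T^2$, where $T^2$ is the mapping torus of $\bar f$.

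On this surface bundle I would apply Thurston's construction (\cite{ThSympl}; see also \cite{Bou,FGM,BL,FVcons}). Suppose one can produce a closed 2-form $\tau$ on $X$ whose restriction to each fiber is a positive area form. Then, taking $\omega_{T^2}$ to be an area form on $T^2$, the form $\omega = \tau + K\pi^*\omega_{T^2}$ is closed and, for $K$ sufficiently large, nondegenerate: one has $\omega^2 = \tau^2 + 2K\tau \wedge \pi^*\omega_{T^2}$ since $(\pi^*\omega_{T^2})^2 = 0$, and the cross term $\tau \wedge \pi^*\omega_{T^2}$ is pointwise a positive 4-form that dominates $\tau^2$.

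The principal obstacle is the construction of $\tau$. The approach is a two-stage extension: first apply the three-dimensional Thurston construction to $Y$, which, since $Y$ fibers over $S^1$, yields a closed 2-form $\sigma$ on $Y$ with $\sigma|_F$ a positive area form; then use the hypothesis $f_*[F] = [F]$ to extend $\sigma$ across the mapping-torus gluing to a closed 2-form $\tau$ on $X$. The extension hinges on a cohomological compatibility of $\sigma$ with $f$, which can be arranged by modifying $\sigma$ within its cohomology class, possibly by adding cross terms involving the closed 1-forms on $X$ pulled back from the $T^2$-base; this refinement (and the verification in edge cases, notably when $F = T^2$ and $X$ is a non-K\"ahler solvmanifold, where the naive extension can fail) is carried out in the cited references.
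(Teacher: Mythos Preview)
Your approach is essentially the paper's: use Lemmas~\ref{lem:fp} and~\ref{lem:FPBundle} to exhibit $X$ as an $F$-bundle over $T^2$, then apply Thurston's construction~\cite{ThSympl}. The one point to correct is the reference for the torus-fiber edge case. The sources you cite (\cite{Bou,FGM,BL,FVcons}) treat principal torus bundles or $4$-manifolds with a free $S^1$-action, and do not cover all orientable $T^2$-bundles over $T^2$---in particular the Anosov-monodromy solvmanifolds you yourself flag, where $[F]=0$ in $H_2(X;\mathbb{Q})$ and Thurston's hypothesis fails. The paper instead invokes Geiges~\cite{Geiges}, who proved that \emph{every} orientable $T^2$-bundle over $T^2$ is symplectic; that is the citation you need here.
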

\begin{proof} By Lemma~\ref{lem:fp}   $X$ fibers over $T^2$.  Now the statement follows from \cite{ThSympl}
if the fiber genus of $Y$ is not equal to one, and it follows from \cite{Geiges} if the fiber genus is equal to 1.
\end{proof}

\subsection{The torus bundle case}

In this subsection, we study the case that $Y$ is covered by a $T^2$--bundle over the circle.

\begin{lemma}\label{lem:PrimClass}
Suppose that $Y$ is covered by a $T^2$--bundle over the circle, $\alpha\in H_2(Y;\Z)$ is a primitive homology class. Then $Y$ admits a $T^2$--fibration over $S^1$ such that $\alpha$ is represented by a fiber.
\end{lemma}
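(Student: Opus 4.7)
My plan is to realize $\alpha$ by a torus fiber via Stallings' fibration theorem applied to the Poincar\'e dual of $\alpha$.

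First, I would convert $\alpha\in H_2(Y;\mathbb{Z})$ by Poincar\'e duality into a primitive class in $H^1(Y;\mathbb{Z})=\mathrm{Hom}(\pi_1(Y),\mathbb{Z})$, equivalently a surjection $\phi\colon\pi_1(Y)\twoheadrightarrow\mathbb{Z}$ with kernel $K$. A locally trivial fibration $p\colon Y\to S^1$ inducing $\phi$ and having torus fiber would have its fiber Poincar\'e dual to $\phi$, hence representing $\alpha$; so the task reduces to producing such a fibration.

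The key observation is that $\pi_1(Y)$ is virtually polycyclic: by hypothesis some finite cover $\widetilde Y$ is a $T^2$-bundle over $S^1$, and $\pi_1(\widetilde Y)\cong\mathbb{Z}^2\rtimes\mathbb{Z}$ is polycyclic of Hirsch length $3$, sitting as a finite-index subgroup of $\pi_1(Y)$. Since every subgroup of a virtually polycyclic group is finitely generated, $K$ is finitely generated. Moreover $\widetilde Y$ has contractible universal cover $\mathbb{R}^3$, so $\widetilde Y$ (and hence $Y$) is aspherical; in particular $\pi_2(Y)=0$, $Y$ is irreducible, and $\pi_1(Y)$ is torsion-free.

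These are the hypotheses of Stallings' fibration theorem, which then yields a locally trivial fibration $p\colon Y\to S^1$ inducing $\phi$, with fiber $F$ a closed surface satisfying $\pi_1(F)=K$. Since $Y$ is orientable and the normal bundle of $F$ in $Y$ is trivialized by the fibration, $F$ is orientable. As a subgroup of the virtually polycyclic group $\pi_1(Y)$, the group $K$ is itself virtually polycyclic and hence amenable, so the only closed orientable surface-group possibilities for $K$ are $1$ and $\mathbb{Z}^2$; the trivial case is excluded by irreducibility of $Y$ (otherwise $Y=S^2\times S^1$). Therefore $F=T^2$, and the fiber represents $\alpha$.

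I do not anticipate a serious obstacle: the argument rests on Stallings' theorem, standard polycyclic group theory, and the classification of surface groups by amenability. The main care required is to eliminate non-torus fibers (in particular the Klein bottle), which is handled by coorientability of $F$ in the orientable $Y$; everything else is routine once the virtually polycyclic structure of $\pi_1(Y)$ is in place.
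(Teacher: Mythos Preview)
Your argument is correct and takes a genuinely different route from the paper's proof. The paper proceeds geometrically: it first treats the case where $Y$ is itself a $T^2$--bundle by an explicit case analysis on $\mathrm{rank}\,\ker(\varphi_*-\mathrm{id})$ and a cut-and-paste construction of the torus representing $\alpha$, and then reduces the general (finitely covered) case to this one via the Thurston norm---using Gabai's theorem that the singular norm equals the Thurston norm to represent $\alpha$ by an embedded torus, and then checking directly that its complement is $T^2\times I$. Your approach instead extracts the algebraic content: since $\pi_1(Y)$ is virtually polycyclic it is Noetherian, so the kernel of the Poincar\'e dual $\phi\colon\pi_1(Y)\to\mathbb{Z}$ is finitely generated, Stallings' theorem produces the fibration in one stroke, and amenability of subgroups of virtually polycyclic groups pins down the fiber as $T^2$. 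Your argument is shorter, uniform (no case split, no separate treatment of the covered case), and avoids the Thurston-norm machinery; the trade-off is that it invokes Stallings' fibration theorem as a black box, whereas the paper's proof is more self-contained and gives an explicit geometric picture of the fiber.
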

\begin{proof}
We first consider the case that $Y$ itself is a $T^2$--bundle over the circle.
Let $\varphi$ be the monodromy of $Y$, $F$ be a fiber of $Y$. Consider $$k_1=\mathrm{rank}\:\mathrm{ker}(\varphi_*-\mathrm{id}\co H_1(T^2)\to H_1(T^2)).$$ If $k_1=0$,
then $H_2(Y)$ is generated by $[F]$, our conclusion obviously holds. If $k_1=2$, then $Y=T^3$, our conclusion also holds.

From now on we assume $k_1=1$. Since $H_1(T^2)$ is torsion-free, there exists a simple closed curve $c\subset F$  representing a generator of $\mathrm{ker}(\varphi_*-\mathrm{id})$. We may isotope $\varphi$ so that $\varphi(c)=c$. 
The complement of $c\times S^1$ is an annulus bundle over $S^1$, hence homeomorphic to $T^2\times I$. So $c\times S^1$ is also a fiber of a fibration of $Y$.

$H_2(Y)\cong\Z^2$ is generated by $[c\times S^1]$ and $[F]$.
Suppose $\alpha=p[c\times S^1]+q[F]$. Without loss of generality, we may assume $p,q>0$. A surface representing $\alpha$ can be obtained as follows. We take $p$ copies of $c\times S^1$ and $q$ copies of $F$, make them transverse,
then perform oriented cut-and-paste to them. The resulting surface is a torus $F'$ representing $\alpha$. The complement of $q$ copies of $F$ is $q$ copies of $T^2\times I$, and each $c\times S^1$ intersects each $T^2\times I$ in
a vertical annulus, so we see that the complement of $F'$ is homemorphic to $T^2\times I$. Hence $F'$ is a fiber of a $T^2$--fibration of $Y$. This finishes the proof when  $Y$ itself is a $T^2$--bundle over $S^1$.

If $Y$ is covered by $\widetilde Y=T^2\rtimes S^1$, let $p\co \widetilde Y\to Y$ be the covering map. Since the Thurston norm of $\widetilde Y$ is zero, it follows from the fact that the singular Thurston norm is equal to the Thurston norm \cite{G1} that the Thurston norm of $\alpha$ is zero, so $\alpha$ is represented by a collection of disjoint tori. Since $\alpha$ is primitive, it is easy to see $\alpha$ is represented by an embedded torus $T$. The preimage $p^{-1}T$ consists of several disjoint homologically essential tori, so the case we discussed before implies that $\widetilde Y-p^{-1}(T)$ consists of several copies of $T^2\times I$. Now $Y-T$ is a connected manifold covered by $T^2\times I$, and it has two boundary components each homeomorphic to $T^2$, so $Y-T=T^2\times I$. Hence $Y$ is a torus bundle with fiber $T$. 
\end{proof}

\begin{prop}\label{prop:TorusBundle}
Suppose that $Y$ is covered by a $T^2$--bundle over the circle, $f\co  Y\to Y$ is an orientation-preserving homeomorphism, $X=Y\rtimes_fS^1$. Then the following 4 conditions are equivalent: \newline
(1) $X$ admits a symplectic structure;\newline
(2) $b_1(X)\ge2$;\newline
(3) there exists a $T^2$--fibration of $Y$ such that $f$ is fiber preserving and $f$ preserves the orientation of fibers;\newline
(4) $X$ is an orientable $T^2$--bundle over $T^2$.
\end{prop}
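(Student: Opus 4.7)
The plan is to prove the four conditions equivalent by establishing the cycle $(4)\Rightarrow(1)\Rightarrow(2)\Rightarrow(3)\Rightarrow(4)$. For $(4)\Rightarrow(1)$, I invoke Geiges's theorem \cite{Geiges} that every orientable $T^2$-bundle over $T^2$ admits a symplectic structure. The implication $(1)\Rightarrow(2)$ is a short characteristic class computation: since $X$ is a mapping torus, $\chi(X)=\sigma(X)=0$, and Poincar\'e duality together with $\chi(X)=2-2b_1(X)+b_2(X)=0$ gives $b_2(X)=2b_1(X)-2$; a symplectic structure forces $b^+(X)\ge 1$, which combined with $\sigma(X)=0$ yields $b_2(X)\ge 2$, hence $b_1(X)\ge 2$. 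For $(3)\Rightarrow(4)$, Lemma~\ref{lem:FPBundle} realizes $X$ as a $T^2$-bundle over the mapping torus $\overline X$ of the induced map $\overline f\co S^1\to S^1$; because $f$ preserves the orientations of $Y$ and of the $T^2$-fibers, $\overline f$ preserves the orientation of the base $S^1$, so $\overline X=T^2$, giving the desired orientable $T^2$-bundle structure over $T^2$.

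The substance is the implication $(2)\Rightarrow(3)$. The Wang exact sequence for the fibration $Y\to X\to S^1$, combined with rank-nullity, gives
$$b_1(X)=1+\dim\ker\bigl(f_*-\mathrm{id}\co H_1(Y;\Q)\to H_1(Y;\Q)\bigr),$$
so $b_1(X)\ge 2$ produces a nonzero class in $H_1(Y;\Q)$ fixed by $f_*$. Since $f$ is orientation-preserving, Poincar\'e duality on $Y$ intertwines $f_*$ on $H_1(Y;\Q)$ with its transpose on $H_2(Y;\Q)$, which has the same $1$-eigenspace dimension; therefore there is also a nonzero class in $H_2(Y;\Q)$ fixed by $f_*$. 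Clearing denominators and then dividing out the greatest common factor of its coefficients yields a primitive $\alpha\in H_2(Y;\Z)$ with $f_*(\alpha)=\alpha$. Lemma~\ref{lem:PrimClass} then furnishes a $T^2$-fibration of $Y$ whose fiber $F$ represents $\alpha$, so $(Y,f)$ is fibered in the sense of the preceding definition; Lemma~\ref{lem:fp} now isotopes $f$ to a fiber-preserving map preserving the orientation of fibers, which is condition (3).

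The main obstacle is the passage, inside $(2)\Rightarrow(3)$, from $b_1(X)\ge 2$ to a \emph{primitive integral} class in $H_2(Y;\Z)$ fixed by $f_*$: the Wang sequence only produces directly a rational fixed class in $H_1(Y;\Q)$, and one must translate to $H_2$ through Poincar\'e duality and then discretize before Lemma~\ref{lem:PrimClass} becomes applicable. Everything else in the cycle is either formal or a direct appeal to the lemmas already established earlier in the paper.
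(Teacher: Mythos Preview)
Your proof is correct and follows the same cycle of implications as the paper, invoking the same key inputs (Geiges, Lemma~\ref{lem:PrimClass}, Lemma~\ref{lem:fp}, Lemma~\ref{lem:FPBundle}); the only cosmetic difference is that for $(1)\Rightarrow(2)$ you compute via $\chi(X)=\sigma(X)=0$ while the paper phrases the same count through the ranks $k_i=\mathrm{rank}\,\ker(f_*-\mathrm{id})$ on $H_i(Y)$. You also spell out a couple of points the paper leaves implicit (why the primitive integral fixed class exists, and why $\overline f$ is orientation-preserving so that the base is $T^2$ rather than a Klein bottle), which is all to the good.
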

\begin{proof}
\noindent(1)$\Rightarrow$(2). Let
$$k_i=\mathrm{rank}\:\mathrm{ker}(f_*-\mathrm{id}\co H_i(Y)\to H_i(Y)).$$
By Poincar\'e duality $k_1=k_2$. Moreover, by Mayer--Vietoris it is easy to see $b_1(X)=k_1+1$, $b_2(X)=2k_1$.
Since $X$ is symplectic, $b_2(X)>0$, so $k_1\ge1$, hence $b_1(X)\ge2$.

\noindent(2)$\Rightarrow$(3).
Since $b_1(X)\ge2$, $k_2=k_1\ge1$. Let $\alpha\in H_2(Y)$ be a primitive class in $\mathrm{ker}(f_*-\mathrm{id})$. 
By Lemma~\ref{lem:PrimClass},
$\alpha$ represents a fiber $F$ of a $T^2$--fibration $p\co Y\to S^1$. Our conclusion follows from Lemma~\ref{lem:fp}.

\noindent(3)$\Rightarrow$(4).
This follows from Lemma~\ref{lem:FPBundle}.

\noindent(4)$\Rightarrow$(1). This is a theorem of Geiges \cite{Geiges}.
\end{proof}

\subsection{Symplectic Kodaira dimension and the virtual extension}\label{Subsect:Kod}

A symplectic $4-$manifold $(X, \omega)$ is said to be minimal if it does not
contain any symplectic sphere with self-intersection $-1$. 

When $(X, \omega)$ is minimal, its symplectic Kodaira dimension  is
defined by the products $K_{\omega}^2$ and $K_{\omega}\cdot [\omega]$,
where $K_{\omega}$ is the symplectic canonical class:

$$\kappa(X, \omega)=\left\{\begin{array}{ll}
-\infty & K_{\omega}^2<0\ or\ K_{\omega}\cdot [\omega] <0\\
0 & K_{\omega}^2=0\ and\ K_{\omega}\cdot [\omega]=0 \\
1 & K_{\omega}^2=0\ and\ K_{\omega}\cdot [\omega]>0\\
2 & K_{\omega}^2>0\ and\ K_{\omega}\cdot [\omega] >0
                  \end{array}\right.$$

For a general symplectic  $4-$manifold, the Kodaira dimension  is
defined as the Kodaira dimension of any of its minimal models.

According to \cite{L1},
$\kappa(X, \omega)$ is  independent of the choice
of symplectic form $\omega$ and hence it will be denoted by $\kappa(X)$.

If $(X,\omega)$ is symplectic, $p\co \widetilde X\to X$ is a finite degree covering map, then $\widetilde X$ has a symplectic form $\widetilde{\omega}=p^*\omega$, and $K_{\widetilde{\omega}}=p^*K_{\omega}$. It was observed in \cite{LZ}  that $\kappa(X)=\kappa(\widetilde X)$. In light of this invariance property, we introduce  the following virtual version of $\kappa$.

\begin{definition} Suppose $X$ is virtually symplectic, let  $\widetilde X$ be any symplectic manifold which finitely covers $X$.  We  define the 
{\it virtual Kodaira dimension} of $X$ by 
$$v\kappa(X)=\kappa(\widetilde X).$$ 
\end{definition}

It is easy to check  that $v\kappa(X)$ is independent of the choice of $\widetilde X$.

\subsection{Symplectic  mapping tori with non-positive $\kappa$}
In this subsection $Y$ is a prime $3-$manifold. 

We first make a simple observation. 


\begin{lemma}\label{lem:Kodle1}
If $X$ is a mapping torus admitting a symplectic structure $\omega$, then $\kappa(X)\leq 1$.
\end{lemma}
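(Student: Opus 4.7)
The plan is to compute $K_\omega^2$ purely topologically, observe it vanishes for any mapping torus, and show that $X$ is automatically symplectically minimal; then $\kappa(X)\le 1$ follows directly from the definition in Section~\ref{Subsect:Kod}, since $\kappa = 2$ would require $K_\omega^2 > 0$.

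For the first step, $\chi(X) = \chi(Y)\chi(S^1) = 0$ by multiplicativity of Euler characteristic in fiber bundles. For the signature, the Wang sequence for $X \to S^1$ provides a short exact sequence
\begin{equation*}
0 \to \mathrm{coker}(f^{*}-1|_{H^1(Y;\R)}) \to H^2(X;\R) \to \ker(f^{*}-1|_{H^2(Y;\R)}) \to 0,
\end{equation*}
with the two outer groups of equal dimension by Poincar\'e duality on $Y$ together with the equality of kernel and cokernel rank for any endomorphism of a finite-dimensional space. The image $I \subset H^2(X;\R)$ of the leftmost group is isotropic under the intersection form: from the Leray--Serre description, classes in $I$ are represented by forms of the shape $\alpha \wedge \pi^{*}d\theta$, with $\pi\co X \to S^1$ the fibration, so any cup product of two such classes contains the factor $(\pi^{*}d\theta)^2 = 0$. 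Being isotropic of half the dimension of $H^2(X;\R)$, $I$ is Lagrangian, forcing $\sigma(X) = 0$. Combining, $K_\omega^2 = 2\chi(X) + 3\sigma(X) = 0$.

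For the minimality step, I claim that any smoothly embedded $2$-sphere $C \subset X$ has $C \cdot C = 0$. Since $\pi_2(S^1) = 0$, the composite $C \hookrightarrow X \to S^1$ is null-homotopic, so $C$ lifts to an embedded sphere $\widetilde C$ in the infinite cyclic cover $\widetilde X \cong Y \times \R$. Self-intersections of compact $2$-cycles are preserved by the covering projection, so $C \cdot C = \widetilde C \cdot \widetilde C$. But any compact $2$-cycle in $Y \times \R$ can be pushed into a single slice $Y \times \{t\}$, and two such cycles can be separated into disjoint slices, so the intersection form on compact $2$-cycles in $Y \times \R$ vanishes identically. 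Hence $C \cdot C = 0$, so $X$ contains no smoothly embedded $(-1)$-sphere --- in particular no symplectic one --- and is symplectically minimal. Thus $X$ equals its own minimal model, and $K_\omega^2 = 0$ directly gives $\kappa(X) \le 1$. I expect the isotropy claim in the signature computation to be the main subtle point; the minimality argument is essentially formal once the infinite cyclic cover is invoked, and the final deduction is immediate from the definition.
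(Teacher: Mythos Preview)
Your proof is correct, and it actually proves slightly more than the paper does. The paper's own argument relies on the standing hypothesis of that subsection that $Y$ is prime: it observes $\pi_2(X)=\pi_2(Y)$, notes that a symplectic $(-1)$--sphere would force $\pi_2(Y)\neq 0$, and then uses primeness to conclude $Y=S^2\times S^1$ and hence $X=S^2\times T^2$, which is minimal. Your minimality argument, by lifting any embedded sphere to the infinite cyclic cover $Y\times\R$ and reading off self-intersection there, bypasses primeness entirely and shows that \emph{every} $4$--dimensional mapping torus is symplectically minimal. That is a genuine strengthening.

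For the signature step, the paper simply asserts $\sigma(X)=0$ as a known fact; you supply a proof via the Wang filtration. One small wording issue: saying classes in $I$ are ``represented by forms of the shape $\alpha\wedge\pi^*d\theta$'' is not literally accurate, since a $1$--form $\alpha$ on $Y$ need not extend over $X$ when $f^*\alpha\neq\alpha$. The clean justification is that $I=\ker\big(H^2(X;\R)\to H^2(Y;\R)\big)$ is the image of $H^2(X,Y;\R)$, and by excision $H^2(X,Y)\cong H^2(Y\times[0,1],Y\times\{0,1\})$, whose classes are represented by compactly supported forms $p^*\alpha\wedge\rho(t)\,dt$ on a collar; these manifestly cup to zero. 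Equivalently, in the Leray--Serre spectral sequence one has $E_2^{1,1}\smile E_2^{1,1}\subset E_2^{2,2}=0$. Either way your isotropy conclusion stands, and the Lagrangian subspace forces $\sigma(X)=0$ exactly as you say.
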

\begin{proof}
Let $X$ be a mapping torus with fiber $Y$. 
We first show that $(X, \omega)$ is always minimal. 
Clearly $\pi_2(X)=\pi_2(Y)$. If $(X, \omega)$ is not minimal then $\pi_2(Y)$ is infinite. If $Y$ has infinite $\pi_2$, since $Y$ is assumed to be prime, $Y=S^2\times S^1$. And $X=S^2\times T^2$, which is
clearly minimal. 

Notice that the Euler number of $X$ is zero, and the signature of $X$ is zero. We have $K_{\omega}^2=3\sigma(X)+2\chi(X)=0$, so $\kappa(X)\ne2$
since $(X, \omega)$ is minimal.
\end{proof}

\begin{proof}[Proof of Theorem~\ref{thm:ClassifyFiber}]


Suppose a mapping torus $X$ is symplectic and $\kappa(X)=-\infty$. By the classification in \cite{Liu},  $X$ must be an $S^2-$bundle over $T^2$.
From the homotopy exact sequence, $\pi_1(Y)$ is a subgroup of $\Z\oplus \Z$ with quotient $\Z$. It is clear that $\pi_1(Y)=\Z$.
The only such manifold is $S^1 \times S^2$. Then $X=S^2\times T^2$.

If $\kappa=0$, then by \cite{L2,B} $2\le vb_1(X)\le4$. By Theorem~\ref{thm:vb}, $Y$ is covered by a $T^2$--bundle over $S^1$. Proposition~\ref{prop:TorusBundle} shows that $X$ is a $T^2$--bundle over $T^2$.
\end{proof}

Minimal symplectic $4$--manifolds with $\kappa=0$ have torsion symplectic canonical class (\cite{L1}), and thus can be viewed as symplectic analogues of Calabi--Yau surfaces.
It is shown in \cite{L2}  that symplectic CY surfaces are $\Z$--homology K3 surface, $\Z$--homology Enriques surface, and $\Q$--homology
$T^2$--bundles over $T^2$.

A basic problem is whether a symplectic CY surface must be diffeomorphic to K3 surface, Enriques surface or a $T^2$--bundle over $T^2$.

It is shown in \cite{U} and \cite{Do} respectively that nontrivial positive genus and genus zero fiber sums do not give rise to  any new symplectic CY surface.

Friedl and Vidussi \cite{FV3} (and \cite{Bow}) investigate this problem for circle bundles over 3-manifolds and deduce the base must be a $T^2-$bundle. Furthermore,
 the total space is shown to be finitely covered by a $T^2-$bundle over $T^2$, and
if the circle bundle has trivial or non-torsion Euler class,
it is itself a $T^2-$bundle over $T^2$.

It is recently observed   in \cite{Bay}  that, if a symplectic CY surface $(X, \omega)$ fibers over a $2-$manifold, then $X$ is a torus bundle over 
torus.  

\begin{remark}As suggested by Saveliev, it might be  possible to  classify complex mapping tori. One interesting example is the Inoue surface, which, 
topologically, is a $3-$torus bundle over $S^1$ with infinite order monodromy and with homology of $S^1 \times S^3$.
\end{remark}

\section{Virtual symplecticity}\label{Sect:VS}

The goal of this section is to prove Theorem~\ref{thm:VirtSympl}.

 In case (1) of Theorem~\ref{thm:VirtSympl}, $Y$ is either $S^1\times S^2$ or $\mathbb RP^3\#\mathbb RP^3$. By Lemma~\ref{lem:S1S2}, $X$ is always covered by $S^2\times T^2$, so $X$ is virtually symplectic and $v\kappa(X)=-\infty$.
The case (2) of Theorem~\ref{thm:VirtSympl}
 immediately follows from Proposition~\ref{prop:TorusBundle}.

From now on we consider case (3) of Theorem~\ref{thm:VirtSympl}, namely, $X$ is virtually fibered with genus of fiber $>1$.

Let $(X, \omega)$ be a symplectic $4-$manifold  with a Lagrangian torus $L$.
By Weinstein's Lagrangian neighborhood theorem, there is a canonical Lagrangian framing $f$ of $L$. The corresponding push off
is called the  Lagrangian push off.  Denote the meridian of $L$ by $m_L$.  Given a simple loop $\gamma$ in $L$ and an integer $k$, the {\it Luttinger surgery} is the 
$f-$framed torus surgery 
whose regluing map is specified by sending $m_L$ to a simple loop  in the homology class  $[m_L]+k[\gamma_f]$. 
It was discovered  by Luttinger in \cite{Lut} that the resulting manifold admits symplectic structures. 

We will apply Luttinger surgery to the case that $X=F\times (S^1\times S^1)$ with a product symplectic form,  and $L$ is a
product Lagrangian torus of the form 
$$\alpha\times p \times S^1     \quad\mathrm{or} \quad  \beta\times S^1\times p,$$
where $F$ is a surface, $\alpha, \beta$ are simple loops in $F$, and $p$ is a point in $S^1$.   

In this case the Lagrangian framing is easy to describe. 
In particular, the Lagrangian push offs   are still in the same product form. 

Luttinger surgery is related to Dehn surgery in dimension $3$. Suppose $M$ is a $3$--manifold, $K\subset M$ is a knot with a frame $\lambda$ and meridian $\mu$. For any rational number $\frac pq$,  let $M_{\frac pq}(K)$ be the manifold obtained from $M$ by doing Dehn surgery on $K$ with slope in the homology class $p[\mu]+q[\lambda]$. Now suppose $M\times S^1$ is a submanifold of a symplectic $4$--manifold $X$, and $K\times S^1$ is a Lagrangian submanifold such that $\lambda\times S^1$ is the Lagrangian framing. Then $M_{1/k}\times S^1$ is obtained from $M\times S^1$ by a Luttinger surgery on $K\times S^1$ whose regluing map sends $m_L$ to $[m_L]+k[\lambda\times\mathrm{point}]$.

For any simple closed curve $\alpha$ in a surface $\Sigma$, let  $\tau_{\alpha}$ be the positive Dehn twist along $\alpha$.
The following fact is well-known.

\begin{prop}\label{prop:DehnTwist}
Suppose $\Sigma$ is a surface in a $3$--manifold $M$, $K\subset \Sigma$ is a knot. Let $\lambda$ be the frame on $K$ specified by $\Sigma$, then $M_{1/k}(K)$ can also be obtained from $M$ by cutting open along $\Sigma$ then regluing by $\tau_{K}^k$.
\end{prop}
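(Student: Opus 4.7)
The plan is to reduce to a local model, since both operations modify $M$ only near $K$: the twist $\tau_K^k$ is supported on an annular neighborhood of $K$ in $\Sigma$, so outside such a neighborhood the cut-and-reglue is the identity, as is $\tfrac{1}{k}$-surgery. I would fix a tubular neighborhood $N\cong K\times D^2$ of $K$ so that $\Sigma\cap N=A=K\times I$ for $I\subset D^2$ a diameter; the $\Sigma$-framing $\lambda$ is then a standard longitude of $\partial N$, and the meridian is $\mu=\{\theta_0\}\times\partial D^2$. The task reduces to showing that cutting $N$ along $A$ and regluing by $\tau_K^k$ produces the same solid torus as $\tfrac{1}{k}$-Dehn surgery on $K$ with framing $\lambda$.

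Cutting $N$ along $A$ splits it into two half-solid-tori $N_\pm=K\times D^2_\pm$. Regluing via $\tau_K^k$, which acts on $A$ by $(\theta,s)\mapsto(\theta+kf(s),s)$ for a monotone twist profile $f$ with $f(-1)=0$ and $f(1)=1$, produces a new solid torus $N'$; since $\tau_K^k$ is the identity near $\partial A$, the boundary $\partial N'$ is canonically identified with $\partial N$, so it remains to compute the class on $\partial N$ represented by a meridian of $N'$. I would do this by exhibiting a meridian disk: start from the half-disk $D_-=\{\theta_0\}\times D^2_-\subset N_-$ with edge $\{\theta_0\}\times I\subset A$; this edge is identified via $\tau_K^k$ with the tilted arc $\{(\theta_0+kf(s),s)\colon s\in I\}$, which I complete to a disk $\widetilde D_+\subset N_+$ by sweeping each point vertically in the $t$-direction, then set $D'=D_-\cup_{\tau_K^k}\widetilde D_+$.

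Tracing $\partial D'$ on the boundary torus $\partial N=K\times\partial D^2$, the upper (tilted) arc's $\theta$-coordinate undergoes $k$ full rotations while $\partial D^2$ is traversed once, so $[\partial D']=[\mu]+k[\lambda]$; this is exactly the $\tfrac{1}{k}$-surgery slope, identifying $N'$ with the surgery solid torus and finishing the proof. The main bookkeeping obstacle is aligning the sign conventions for $\tau_K$, $\mu$, and $\lambda$ so that the Dehn twist produces $+k[\lambda]$ rather than $-k[\lambda]$; once conventions are fixed, the winding-number computation on $\partial N$ is immediate.
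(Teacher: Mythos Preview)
The paper does not actually prove this proposition: it is introduced as ``well-known'' and stated without argument. Your proposal supplies a correct, standard proof via the local model, constructing an explicit meridian disk in the reglued solid torus and reading off its boundary class as $[\mu]+k[\lambda]$; the sign caveat you flag is the only real thing to pin down, and once conventions are fixed the computation goes through exactly as you describe.
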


Suppose $Y$ is finitely covered by a surface bundle $\widetilde Y=F\times_{\varphi}S^1$ with $g(F)>1$. By Nielsen--Thurston's classification of surface automorphisms, there exists a collection of disjoint essential simple closed curve $\mathcal C\subset F$, such that $\varphi(\mathcal C)$ is isotopic to $\mathcal C$. Moreover, $F\backslash\mathcal C$ has two parts $F_1, F_2$, such that after an isotopy $\varphi(F_1)=F_1$, $\varphi(F_2)=F_2$, $\varphi_1=\varphi|{F_1}$ is freely isotopic to a pseudo--Anosov map, and $\varphi_2=\varphi|_{F_2}$ is freely isotopic to a periodic map. Iterating $\varphi$ if necessary, we may assume $\varphi|_{\mathcal C}=\mathrm{id}_{\mathcal C}$, $\varphi_1$ maps each component of $F_1$ to itself,  and $\varphi_2$ is freely isotopic to $\mathrm{id}_{F_2}$. Let $Y_i=F_i\rtimes_{\varphi_i}S^1$, then $Y_1$ is hyperbolic and $Y_2=F_2\times S^1$.

Now $X$ is finitely covered by $\widetilde X=\widetilde Y\rtimes_{\Psi}S^1$. The monodromy $\Psi$ can be isotoped to a map which sends each hyperbolic JSJ piece of $\widetilde X$ to a hyperbolic piece, and each Seifert fibered piece to a Seifert fibered piece. Iterating $\Psi$ if necessary, we may assume $\Psi_1=\Psi|_{Y_1}=\mathrm{id}$ and $\Psi_2=\Psi_{Y_2}$ maps each component of $Y_2$ to itself. We may also assume $\Psi_2$ preserves the $S^1$--fibers in $Y_2=F_2\times S^1$, so it induces a map $\psi_2$ on $F_2$. By Lemma~\ref{lem:TorusId}, the restriction of $\Psi$ to each boundary component of $Y_2$ is isotopic to the identity. Let $X_i=Y_i\rtimes_{\Psi_i} S^1$, then
$$X_1=(F_1\rtimes_{\varphi_1}S^1)\times S^1,\quad X_2=(F_2\times S^1)\rtimes_{\Psi_2}S^1.$$

The proof of Theorem~\ref{thm:VirtSympl} will be completed by the next proposition.

\begin{prop}\label{prop:Omega}
There exists a symplectic form $\Omega$ on $\widetilde X$ such that $K_{\Omega}[F]=2g(F)-2$ and $\kappa(\widetilde X)=1$.
\end{prop}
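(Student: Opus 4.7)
The plan is to build $\Omega$ by iterated Luttinger surgery on the product symplectic manifold $(F \times T^2,\, \omega_F \oplus \omega_{T^2})$. Let $S^1_a$ and $S^1_b$ denote the two circle factors of $T^2$, playing the roles of the fiber circle of $\widetilde Y \to S^1$ and the base circle of $\widetilde X \to S^1$ respectively. Each Dehn twist appearing in the monodromies $\varphi$ and $\Psi$ will become a Luttinger surgery on a disjoint product Lagrangian torus, using the correspondence of Proposition~\ref{prop:DehnTwist}.

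In the first round I realize the 4-manifold $\widetilde Y \times S^1_b$. Using Dehn--Lickorish, factor $\varphi = \tau_{\alpha_n}^{k_n} \circ \cdots \circ \tau_{\alpha_1}^{k_1}$ with curves $\alpha_i \subset F_1 \cup \mathcal{C}$ (possible since $\varphi|_{F_2} = \mathrm{id}$ after isotopy rel $\partial F_2$). Choose distinct $p_1, \dots, p_n \in S^1_a$ and perform Luttinger surgery on $L_i = \alpha_i \times \{p_i\} \times S^1_b$ with coefficient $k_i$. By Proposition~\ref{prop:DehnTwist} applied slice-by-slice in $S^1_b$, each surgery replaces the trivial gluing of $F \times S^1_a$ at $\{p_i\}$ by $\tau_{\alpha_i}^{k_i}$, and the composition realizes $\varphi$. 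In the second round I realize $\widetilde X$. The piece $\Psi|_{Y_1} = \mathrm{id}$ requires no surgery; on $Y_2 = F_2 \times S^1_a$, after iteration $\Psi_2 = \psi_2 \times \mathrm{id}$ with $\psi_2|_{\partial F_2} = \mathrm{id}$ by Lemma~\ref{lem:TorusId}. Factor $\psi_2 = \tau_{\beta_m}^{\ell_m} \circ \cdots \circ \tau_{\beta_1}^{\ell_1}$ with $\beta_j \subset F_2$, choose distinct $q_1, \dots, q_m \in S^1_b$, and perform Luttinger surgery on $L'_j = \beta_j \times S^1_a \times \{q_j\}$ with coefficient $\ell_j$. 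Viewing $S^1_a$ now as the Luttinger circle of Proposition~\ref{prop:DehnTwist}, each such surgery realizes the monodromy regluing by $\tau_{\beta_j}^{\ell_j} \times \mathrm{id}$ at $\{q_j\}$. Since $\alpha_i \cap \beta_j = \emptyset$, the tori $L_i$ and $L'_j$ are pairwise disjoint, so Luttinger's theorem delivers a symplectic form $\Omega$ on the resulting 4-manifold, which is diffeomorphic to $\widetilde X$.

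For the canonical class, represent $[F] \in H_2(\widetilde X)$ by $F \times \{(q_0, q_0')\}$ with $q_0, q_0'$ avoiding every $p_i$ and $q_j$; this representative sits in the undisturbed region of every surgery and so survives into $\widetilde X$ as a symplectic submanifold with $\Omega|_F = \omega_F$. The initial canonical class on $F \times T^2$ is $K_0 = (2g(F) - 2)\,\mathrm{PD}[\{\mathrm{pt}\} \times T^2]$, and each Luttinger surgery modifies $K$ by a multiple of the Poincar\'e dual of its surgery torus; since every such torus is disjoint from the chosen representative of $[F]$, we conclude
\[
K_\Omega \cdot [F] \;=\; K_0 \cdot [F] \;=\; 2g(F) - 2.
\]
For the Kodaira dimension: $\widetilde X$ is minimal (being a mapping torus with $\pi_2(\widetilde X) = \pi_2(F) = 0$ since $g(F) > 1$), and $\chi = \sigma = 0$ force $K_\Omega^2 = 0$, so $\kappa(\widetilde X) \in \{-\infty, 0, 1\}$. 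The value $\kappa = 0$ is excluded because the canonical class of a symplectic CY surface is torsion, contradicting $K_\Omega \cdot [F] \neq 0$; the value $\kappa = -\infty$ is excluded because the minimal symplectic 4-manifolds with $\kappa = -\infty$ (namely $\mathbb{CP}^2$ and $S^2$-bundles) have $\pi_2 \neq 0$. Hence $\kappa(\widetilde X) = 1$.

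The main obstacle is the Luttinger-surgery bookkeeping: organizing the Dehn-twist decompositions so that the associated Lagrangian tori are pairwise disjoint, and verifying that the composition of cut-and-pastes along the two circle directions genuinely reconstructs $\widetilde X$ rather than a related mapping torus with different gluing data. A secondary subtlety is reducing $\Psi_2$ to the product form $\psi_2 \times \mathrm{id}$; the bundle-automorphism discrepancy lies in $H^1(F_2;\mathbb{Z})$ and can be killed by further iterating $\Psi$ before starting, or alternatively absorbed by additional Luttinger surgeries along tori of the form $\{\mathrm{pt}\} \times S^1_a \times S^1_b$.
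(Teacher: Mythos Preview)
Your overall strategy---Luttinger surgery on the product $(F\times T^2,\omega_F\oplus\omega_{T^2})$, one round for $\varphi$ and one for $\psi_2$---is exactly the paper's approach, and your treatment of the canonical class and the Kodaira dimension is fine (your $\kappa$ argument is in fact slightly more self-contained than the paper's, which invokes Lemma~\ref{lem:Kodle1} and Theorem~\ref{thm:ClassifyFiber}). The genuine gap is the one you flag as a ``secondary subtlety'': the reduction of $\Psi_2$ to the product $\psi_2\times\mathrm{id}$. Neither of your proposed fixes works. Iterating $\Psi$ replaces the twisting function $h\co F_2\to S^1$ by $h+h\circ\psi_2+\cdots+h\circ\psi_2^{\,n-1}$, which does not vanish in general (take $\psi_2=\mathrm{id}$ and $[h]\ne0$: iteration multiplies the obstruction by $n$). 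And the tori $\{\mathrm{pt}\}\times S^1_a\times S^1_b$ are \emph{symplectic}, not Lagrangian, for $\omega_F\oplus\omega_{T^2}$, so Luttinger surgery is unavailable there.

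The paper's fix is a third round of Luttinger surgeries. View $X_2=(F_2\times S^1)\rtimes_{\Psi_2}S^1$ as a circle bundle over $F_2\rtimes_{\psi_2}S^1$; since $F_2$ has no closed components and the bundle is trivial on the boundary, the Poincar\'e dual of its Euler class is represented by a (possibly disconnected) closed curve $\beta_0\times\{q_0\}$ in a fiber $F_2\times\{q_0\}$. One then performs Luttinger surgery on the \emph{Lagrangian} tori $\beta_0\times S^1_a\times\{q_0\}$ (same shape as your second-round tori $L'_j$), but with the regluing sending the meridian $m$ to $[m]+[\mathrm{pt}\times S^1_a\times q_0]$, i.e.\ twisting in the $S^1_a$ direction rather than along $\beta_0$. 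This realizes precisely the nontrivial Euler class and turns $M_2$ into $\widetilde X$. These extra tori are still disjoint from your chosen fiber $F\times\{(q_0',q_0'')\}$, so the computation $K_\Omega\cdot[F]=2g(F)-2$ goes through unchanged.
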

\begin{proof}
We start with the manifold $F\times S^1\times S^1$, which is clearly symplectic. Then we try to reconstruct $\widetilde X$ by doing Luttinger surgeries on a collection of disjoint Lagrangian tori in $F\times S^1\times S^1$. 

The automorphism $\varphi$ is isotopic to a product of (positive or negative) Dehn twists. 
Since $\varphi|_{F_2}=\mathrm{id}_{F_2}$, we may assume $$\varphi=\tau^{k_1}_{\alpha_1}\circ\tau^{k_2}_{\alpha_2}\circ\cdots\circ\tau^{k_{n_1}}_{\alpha_{n_1}},$$
where $\alpha_i$, $i=1,2,\dots,n_1$, are simple closed curves supported outside the interior of $F_2$  \footnotemark, $k_i\in\mathbb Z$. Choose successive points $p_1,p_2\dots,p_{n_1}\in S^1$.
By Proposition~\ref{prop:DehnTwist}, the manifold $$M_1=(F\rtimes_{\varphi}S^1)\times S^1$$ can be obtained from $(F\times S^1)\times S^1$ by Luttinger surgeries on the Lagrangian tori
\begin{equation}\label{eq:ToriA}
\alpha_1\times p_1\times S^1, \dots,\alpha_{n_1}\times p_{n_1}\times S^1,
\end{equation}
where the regluing map on $ \alpha_i\times p_i\times S^1$ sends the meridian $m^1_i$ to a curve in the homology class $[m^1_i]+k_i[\alpha_i]$. We see that $M_1$ is the union of $X_1$ and $(F_2\times S^1)\times S^1$.

\footnotetext{The curves $\alpha_i$ may not be supported in $F_1$. Some of them may be components of $\mathcal C$ which are adjacent to $F_2$ on both sides.}

Suppose that $$\psi_2=\tau^{l_1}_{\beta_1}\circ\tau^{l_2}_{\beta_2}\circ\cdots\circ\tau^{l_{n_2}}_{\beta_{n_2}},$$
where $\beta_j\subset F_2$ are simple closed curves, $l_j\in\mathbb Z$. Choose successive points $q_1,q_2\dots,q_{n_2}\in S^1$.
In $M_1\supset (F_2\times S^1)\times S^1$, we do further Luttinger surgeries on the Lagrangian tori
\begin{equation}\label{eq:ToriB}
\beta_1\times S^1\times q_1,\dots,\beta_{n_2}\times S^1\times q_{n_2}
\end{equation}
where the regluing map on $ \beta_j\times S^1\times q_j$ sends the meridian $m^2_j$ to a curve in the homology class $[m^2_j]+l_i[\beta_j]$. 
By Proposition~\ref{prop:DehnTwist}, the resulting manifold $M_2$ is a union of $X_1$ with $S^1\times(F_2\rtimes_{\psi_2}S^1)$.

Consider $X_2=(F_2\times S^1)\rtimes_{\Psi_2}S^1$, which also has a circle bundle structure $X_2=S^1\rtimes(F_2\rtimes_{\psi_2}S^1)$. Since $F_2$ has no closed components and the restriction of the bundle on the bundary of $F_2\rtimes_{\psi_2}S^1$ is trivial, the Poincar\'e dual of the Euler class $e(X_2)$ of this bundle can be represented by a (possibly disconnected) closed curve in a fiber.
The difference between the two circle bundles $S^1\times(F_2\rtimes_{\psi_2}S^1)$ and $X_2$ is a twisting in the circle direction on the Poincar\'e dual of $e(X_2)$.
Let $q_0\in S^1\backslash\{q_1,\dots,q_{n_2}\}$ be a point, and $\beta_0\subset F_2$ be a (possibly disconnected) closed curve such that $$\beta_0\times q_0\subset F_2\rtimes_{\psi_2}S^1$$ represents this Poincar\'e dual. 
Then $\widetilde X$ can be obtained from $M_2\supset S^1\times(F_2\rtimes_{\psi_2}S^1)$ by doing  Luttinger surgeries on 
\begin{equation}\label{eq:ToriB0}
\beta_0\times S^1\times q_0
\end{equation}
where the gluing map on each component $\beta_0^l\times S^1\times q_0$ sends the meridian $m_l$ to a curve in the homology class $[m_l]+[\mathrm{point}\times S^1\times q_0]$.

Now $\widetilde X$ is obtained from $F\times T^2$ by doing Luttinger surgeries on disjoint Lagrangian tori in (\ref{eq:ToriA}), (\ref{eq:ToriB}), (\ref{eq:ToriB0}). So it has a symplectic form $\Omega$.
Let $p\in S^1\backslash\{p_1,\dots,p_{n_1}\}$, $q\in S^1\backslash\{q_0,q_1,\dots,q_{n_2}\}$, then $F\times p\times q\subset F\times T^2$ is a symplectic surface disjoint from the previous Lagrangian tori, and $K_{F\times T^2}[F\times p\times q]=2g(F)-2$. Since the symplectic structure is unchanged outside a neighborhood of the Lagrangian tori, in $\widetilde X$ we also have $K_{\Omega}[F\times p\times q]=2g(F)-2$.

Finally, by Lemma~\ref{lem:Kodle1} and Theorem~\ref{thm:ClassifyFiber}, $\kappa(\widetilde X)=1$. Hence $v\kappa(X)=1$. 
\end{proof}

We remark  that,  whenever $Y$ is virtually fibered and $X$ is virtually symplectic, by Theorem~\ref{thm:VirtSympl}  we have $v\kappa(X)=\kappa^t(Y)$. Here $\kappa^t$ is the topological Kodaira dimension of $3-$manifolds introduced in \cite{Z} by Weiyi Zhang.

\section{Discussion on reducible fibers}\label{Sect:Red}

When the fiber is reducible, the monodromy is more complicated than the case of irreducible $3$--manifolds \cite{McCull}. We make the following two conjectures.

\begin{conj}\label{conj:vbRed}
Suppose $X=Y\rtimes S^1$ is a $4$--manifold. If $Y$ is reducible, then $vb_1(X)=\infty$ unless $Y=S^2\times S^1$ or $\mathbb RP^3\#\mathbb RP^3$.
\end{conj}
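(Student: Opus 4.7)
The plan is to reduce the conjecture to the case where the fiber $Y$ contains an $S^2 \times S^1$ prime summand, and then use non-separating hypersurfaces in $X$ of the form $\Sigma \times S^1$, built from $\tilde f$-invariant belt spheres $\Sigma \subset Y$, to construct covers of $X$ with growing first Betti number.

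For the group-theoretic reduction, the Kneser-Milnor decomposition writes $Y = P_1 \# \cdots \# P_n$ with each $P_i$ prime, giving $\pi_1(Y) = \pi_1(P_1) * \cdots * \pi_1(P_n)$. The two excluded cases are precisely the orientable reducible $3$-manifolds whose fundamental group is virtually infinite cyclic: $\pi_1(S^2 \times S^1) = \mathbb{Z}$ and $\pi_1(\mathbb{RP}^3 \# \mathbb{RP}^3) = \mathbb{Z}/2 * \mathbb{Z}/2$, the infinite dihedral group. In all other cases, combining the Kurosh subgroup theorem with an orbifold Euler characteristic computation shows that $\pi_1(Y)$ has finite-index subgroups of unboundedly large free rank; geometrically, $Y$ has finite covers $\tilde Y$ whose Kneser-Milnor decomposition contains arbitrarily many $S^2 \times S^1$ summands.

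Next I would lift to $X$. By Lemma~\ref{lem:FindCov}, each such $\tilde Y$ yields a cover $\tilde X = \tilde Y \rtimes_{\tilde f} S^1$. Using McCullough's structural results \cite{McCull} on the mapping class group of a reducible $3$-manifold, after iterating the monodromy we may assume that $\tilde f$ preserves the Kneser-Milnor decomposition of $\tilde Y$, fixes each prime summand setwise, and preserves the orientation of each $S^2 \times S^1$ summand. After an ambient isotopy, the belt spheres $\Sigma_1, \ldots, \Sigma_m$ of the $S^2 \times S^1$ summands become $\tilde f$-invariant, and each $S_i = \Sigma_i \times S^1 \subset \tilde X$ is a non-separating closed hypersurface diffeomorphic to $S^2 \times S^1$, with Poincar\'e dual $[S_i]^\ast \in H^1(\tilde X; \mathbb{Q})$ restricting on a fiber to $[\Sigma_i]^\ast \in H^1(\tilde Y; \mathbb{Q})$.

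The main obstacle is to show that the $\tilde f^\ast$-fixed subspace of $H^1(\tilde Y; \mathbb{Q})$ has dimension growing without bound as we vary the cover, which via the Wang sequence for the mapping torus gives $b_1(\tilde X) \to \infty$. The delicate point is that the handle-slide Nielsen transformations in $\mathrm{MCG}(\tilde Y)$ act on the free part $\mathbb{Q}^m \subset H^1(\tilde Y; \mathbb{Q})$ through all of $\mathrm{GL}_m(\mathbb{Z})$, so the $[\Sigma_i]^\ast$ need not be individually fixed by $\tilde f^\ast$ even after iteration, and merely enlarging $m$ does not suffice. My plan to bypass this is to pass, for a fixed $\tilde Y$ containing at least one $S^2 \times S^1$ summand, to the $d$-fold cyclic cover $\tilde Y^{(d)} \to \tilde Y$ dual to $[\Sigma_1]^\ast$: the Schreier-type handle generators appearing in $H_1(\tilde Y^{(d)}; \mathbb{Q})$ should be fixed by the lifted monodromy up to conjugation by earlier handles, and hence genuinely fixed after abelianization. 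A direct Reidemeister-Schreier computation in the model case $\tilde Y = \#_2(S^2 \times S^1)$ with a single handle-slide monodromy suggests that the fixed-subspace dimension on $H_1(\tilde Y^{(d)}; \mathbb{Q})$ grows linearly in $d$, which would give $vb_1(X) = \infty$. Carrying this analysis out in general — controlling the interplay of slides, prime permutations, and the $H_1$ contributions of the non-$S^2 \times S^1$ summands — is the hard part of the proof.
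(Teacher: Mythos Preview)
The statement you are attempting to prove is labeled a \emph{conjecture} in the paper; the authors do not claim a proof. The only thing the paper establishes in this direction is Lemma~\ref{lem:CovSphere}, which is exactly your reduction step: any reducible $Y$ other than $S^2\times S^1$ or $\mathbb{RP}^3\#\mathbb{RP}^3$ has a finite cover with at least two $S^2\times S^1$ summands. So your proposal should be read as an attack on an open problem, not as a reconstruction of an existing proof.

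Your reduction to the case $\widetilde Y=\#_m(S^2\times S^1)\#Z$ via Lemmas~\ref{lem:FindCov} and~\ref{lem:CovSphere} is fine, and your identification of the obstacle is exactly right: once $\widetilde Y$ contains $\#_m(S^2\times S^1)$, the monodromy can act on the sphere classes through all of $\mathrm{GL}_m(\mathbb Z)$, so merely enlarging $m$ gives nothing. Your cyclic-cover workaround, however, presupposes a $\tilde f^*$--invariant primitive class $[\Sigma_1]^*\in H^1(\widetilde Y;\mathbb Z)$ to unwind along. In your model case (a single handle slide on $\#_2(S^2\times S^1)$) such a class exists and your linear growth claim is correct. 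But for a generic monodromy there is no such class at all: already on $\#_2(S^2\times S^1)$ the automorphism $a\mapsto ab$, $b\mapsto a$ acts on $H_1$ with characteristic polynomial $t^2-t-1$, so no power fixes any nonzero cohomology class, and for $\#_n(S^2\times S^1)$ with $n\ge3$ one can arrange $\tilde f_*$ to be any hyperbolic element of $\mathrm{GL}_n(\mathbb Z)$. In those cases your construction never gets off the ground.

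What remains after your reduction is essentially the question of whether every free-by-cyclic group $F_n\rtimes_\phi\mathbb Z$ has $vb_1=\infty$. For $n=2$ this follows because such groups are $3$--manifold groups (punctured-torus or thrice-punctured-sphere bundles) and Theorem~\ref{thm:vb3d} applies; for $n\ge3$ it does not reduce to $3$--manifold topology, and at the time of the paper this was genuinely open. Substantial later work (virtual specialness of hyperbolic free-by-cyclic groups, etc.) bears on it, but none of that is in the spirit of the sphere-and-cyclic-cover argument you outline. In short: your plan correctly isolates the difficulty but does not surmount it, and the paper does not either---that is why it is stated as a conjecture.
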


\begin{conj}\label{conj:SymplRed}
Suppose $X=Y\rtimes S^1$ is a sympectic $4$--manifold. If $Y$ is reducible, then $Y=S^2\times S^1$ and $X=S^2\times T^2$.
\end{conj}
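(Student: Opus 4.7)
My plan is to reduce to a Seiberg--Witten obstruction via the Kneser--Milnor decomposition of $Y$, combining a Mayer--Vietoris constraint on $b_2(X)$ with the paper's earlier case analysis to dispose of the ``easy'' reducible fibers, so that the core case becomes nontrivial connected sums. Write
\[
Y=(S^2\times S^1)^{\#a}\#(S^2\tilde{\times} S^1)^{\#b}\#P_1\#\cdots\#P_k,
\]
with each $P_i$ irreducible and not $S^3$. Since $H_2(S^2\tilde{\times} S^1;\Q)=0$, one has $H_2(Y;\Q)\cong\Q^a\oplus\bigoplus_i H_2(P_i;\Q)$, and Mayer--Vietoris for the mapping torus gives $b_2(X)=2\dim\ker(f_*-\mathrm{id}\co H_2(Y;\Q)\to H_2(Y;\Q))$. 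Symplecticity forces $b_2(X)\ge 1$, so $H_2(Y;\Q)\ne 0$, i.e.\ $a\ge 1$ or some $P_i$ has $b_1(P_i)\ge 1$.

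The prime reducible cases I would handle quickly. If $Y=S^2\times S^1$, then by Lemma~\ref{lem:S1S2}, $X$ is covered by $S^2\times T^2$; the classification of minimal symplectic $4$--manifolds with $\kappa=-\infty$ together with $\chi(X)=\sigma(X)=0$ (following the argument in the proof of Theorem~\ref{thm:ClassifyFiber}) then forces $X=S^2\times T^2$. If $Y=S^2\tilde{\times} S^1$, then $b_2(X)=0$, contradicting symplecticity. It remains to rule out every nontrivial connected sum $Y$.

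For this main case I plan to argue Seiberg--Witten theoretically, generalizing Friedl--Vidussi's treatment \cite{FV2} of the product case. After iterating $f$ and passing to a finite cover using Lemma~\ref{lem:FindCov}, McCullough's analysis \cite{McCull} of mapping class groups of reducible $3$--manifolds lets us assume that $f$ preserves the isotopy class of each prime summand and each Haken--Kneser sphere. Taubes' theorem gives a nonvanishing SW basic class for the symplectic $X$, while a Meng--Taubes-type formula for the mapping torus should express the SW invariants of $X$ in terms of a twisted Alexander polynomial $\Delta$ of $\pi_1(Y)$ with respect to $f_*$ and the fibered cohomology class in $H^1(X)$. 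Since $\pi_1(Y)$ splits as a nontrivial free product, $\Delta$ (for an appropriate coefficient system) should vanish identically, contradicting Taubes' nonvanishing. The main obstacle is the monodromy: $f_*$ can permute or mix the free factors of $\pi_1(Y)$, so one must verify that Friedl--Vidussi's ``non-monicity'' argument extends across the $f$-twisted setting---a delicate algebraic computation with twisted Alexander polynomials over noncommutative group rings.
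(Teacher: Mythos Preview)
This statement is labeled a \emph{conjecture} in the paper, and the paper does \emph{not} prove it---it offers only what it explicitly calls ``a plausible but not rigorous argument.'' So there is no proof to match; the right comparison is between your heuristic and theirs.

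The paper's heuristic is different from yours. After recording the known special cases (McCarthy for identity monodromy, Baykur--Friedl for a monodromy preserving a separating essential sphere), the paper uses Lemma~\ref{lem:CovSphere} together with Lemma~\ref{lem:FindCov} to pass to a finite cover of $X$ whose fiber has an $S^2\times S^1$ connected summand. For such a fiber the twisted Monopole/Heegaard Floer homology vanishes \cite{Ni}; the heuristic step is then that the Seiberg--Witten invariant of the mapping torus should equal the Lefschetz number of the monodromy action on Floer homology (citing \cite{Froy,JT}), forcing the SW invariant to vanish and contradicting Taubes. The missing ingredient is precisely that Lefschetz-number identity for general mapping tori.

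Your route via a Meng--Taubes formula and twisted Alexander polynomials is a legitimate alternative heuristic, and you have honestly flagged its own gap: the monodromy twisting obstructs a direct transplant of the Friedl--Vidussi non-monicity argument for free products. Neither approach is complete. The paper's Floer-theoretic line bypasses the noncommutative Alexander-polynomial algebra entirely, reducing everything to one analytic input; your line avoids passing to covers but trades that for a harder algebraic computation.

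One minor slip: all manifolds in the paper are orientable, and $S^2\tilde\times S^1$ is not, so it cannot occur as a prime summand of $Y$. You should set $b=0$ and delete that case from your decomposition.
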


\begin{definition}
A group $G$ is {\it residually finite}, if for any nontrivial element $\alpha\in G$, there exists a finite index normal subgroup $H\vartriangleleft G$, such that $\alpha\notin H$.
\end{definition}

It is well known that the fundamental group of a Haken $3$--manifold is residually finite \cite{He2}. In fact, the Geometrization Conjecture implies that any $3$--manifold group is residually finite. 

The reason that in Conjecture~\ref{conj:vbRed} we exclude $S^2\times S^1$ and $\mathbb RP^3\#\mathbb RP^3$ is the following lemma.

\begin{lemma}\label{lem:CovSphere}
Suppose that $Y$ is a closed orientable reducible $3$--manifold, and $Y$ is not $S^2\times S^1$ or $\mathbb RP^3\#\mathbb RP^3$, then $Y$ has a finite cover of the form $(S^2\times S^1)\#(S^2\times S^1)\#Z$ for some $3$--manifold $Z$.
\end{lemma}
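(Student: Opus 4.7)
The plan is to use the Kneser--Milnor prime decomposition $Y=Y_1\#\cdots\#Y_n$ (necessarily $n\geq 2$, since $Y$ is reducible and not $S^2\times S^1$), together with the following dictionary: any finite cover $\widetilde Y\to Y$ decomposes as the connected sum of the filled covers of the punctured pieces $Y_i^\circ=Y_i\setminus B^3$, plus $b_1(\Gamma)$ additional copies of $S^2\times S^1$, where $\Gamma$ is the ``gluing graph'' whose vertices are the components of the preimages of the $Y_i^\circ$ and whose edges are the preimage decomposition spheres (each independent loop of $\Gamma$ corresponds to a self-connect-sum, contributing one $S^2\times S^1$).

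If two or more summands are already $S^2\times S^1$, take $\widetilde Y=Y$. If exactly one is, write $Y=(S^2\times S^1)\#Y'$ with $Y'\neq S^3$; pick a proper finite-index subgroup $K\leq\pi_1(Y')$ of some index $d\geq 2$ (available by residual finiteness of $3$--manifold groups, preserved under free products by Gruenberg), and combine the corresponding cover $\widetilde Y'\to Y'$ with the $d$--fold cyclic cover of $S^2\times S^1$ (still an $S^2\times S^1$) by matching up the $d$ boundary spheres on each side. The resulting connected degree-$d$ cover $\widetilde Y\to Y$ has gluing graph with $2$ vertices and $d$ edges, so $\widetilde Y\cong\widetilde Y'\#(S^2\times S^1)^{\#d}$ has at least two $S^2\times S^1$ factors.

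In the remaining case, no $Y_i$ is $S^2\times S^1$, so each $G_i=\pi_1(Y_i)$ is nontrivial and each $Y_i$ is irreducible (hence every finite cover of $Y_i$ is irreducible too, by the equivariant sphere theorem, and contributes no $S^2\times S^1$ summand). By residual finiteness pick finite-index normal subgroups $N_i\vartriangleleft G_i$ of indices $d_i\geq 2$; when $n=2$, arrange additionally that $(d_1,d_2)\neq(2,2)$, which is possible precisely when some $G_i$ has a proper subgroup of index $\geq 3$---the only obstruction being $|G_1|=|G_2|=2$, i.e., the excluded manifold $\mathbb RP^3\#\mathbb RP^3$. Let $H=\ker\bigl(\pi_1(Y)\twoheadrightarrow\prod_iG_i/N_i\bigr)$. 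Using the star decomposition of $Y$ (central piece $S^3\setminus nB^3$ with $\pi_1=1$, peripheral pieces $Y_i^\circ$, $n$ connecting spheres) and the fact that each $G_i$ acts on $H\backslash\pi_1(Y)\cong\prod_jG_j/N_j$ through its quotient $G_i/N_i$, an Euler-characteristic count gives
\[
b_1(\Gamma_H)\;=\;1+D\Bigl(n-1-\textstyle\sum_{i}\tfrac{1}{d_i}\Bigr),\qquad D=\textstyle\prod_id_i,
\]
which specializes to $(d_1-1)(d_2-1)$ when $n=2$ and is $\geq 2$ under our choice of $d_i$.

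The main technical step is simply this Euler-characteristic count for $\Gamma_H$; once that is in hand, the identification of $\mathbb RP^3\#\mathbb RP^3$ as the sole exception follows immediately from the elementary inequality $(d_1-1)(d_2-1)\geq 2$.
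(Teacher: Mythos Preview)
Your argument is correct, and it follows a genuinely different route from the paper's.

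The paper does not case-split on the number of $S^2\times S^1$ summands, nor does it compute with the Bass--Serre gluing graph. Instead it writes $Y=Y_1\#Y_2$ with $Y_1\ne S^3$ and $|\pi_1(Y_2)|>2$, and proceeds in two steps: first it passes to the $d$--fold cover $dY_1\#\widetilde Y_2$ induced by a degree--$d>2$ cover $\widetilde Y_2\to Y_2$; then it takes a further cover corresponding to the kernel of a map $\overline\rho\co\pi_1(dY_1\#\widetilde Y_2)\to G$ that is the \emph{same} nontrivial finite quotient $\rho\co\pi_1(Y_1)\to G$ on each $Y_1$ factor and trivial on $\pi_1(\widetilde Y_2)$. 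One then checks directly that $d-1\ge2$ lifted separating spheres have connected complement, producing the desired $(S^2\times S^1)^{\#2}$ summand.

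What each approach buys: the paper's argument is more elementary in that it never invokes the graph-of-groups formalism or an Euler-characteristic count---it is a hands-on two-pass construction plus a connectedness check. Your approach is more systematic: a single normal cover in the main case, together with the closed formula $b_1(\Gamma_H)=1+D\bigl(n-1-\sum_i 1/d_i\bigr)$, which for $n=2$ becomes $(d_1-1)(d_2-1)$ and makes the exceptional role of $\mathbb{RP}^3\#\mathbb{RP}^3$ completely transparent as the unique case where this quantity is $<2$. Your treatment of the cases with an $S^2\times S^1$ summand (constructing the cover by prescribing a transitive action of the free product, with the $\mathbb Z$ factor acting as a $d$--cycle) is also clean and avoids the paper's second pass.
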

\begin{proof}
By our assumption, we may assume $Y=Y_1\#Y_2$, where $Y_1\ne S^3$ and $|\pi_1(Y_2)|>2$. By the residual finiteness of $3$--manifold groups, $Y_2$ has a finite cover $\widetilde Y_2$ of degree $d>2$. Hence $Y$ is $d$--fold covered by $dY_1\#\widetilde Y_2$. Again by the residual finiteness, there is a surjective map $\rho\co \pi_1(Y_1)\to G$ with $|G|<\infty$. We can construct a surjective map $\overline{\rho}\co \pi_1(dY_1\#\widetilde Y_2)\to G$, such that the restriction of $\overline{\rho}$ on $\pi_1(\widetilde Y_2)$ is trivial, and the  restriction of $\overline{\rho}$ on each of the $d$ $\pi_1(Y_1)$ factors is $\rho$. Let $\widetilde Y$ be the cover of $dY_1\#\widetilde Y_2$ corresponding to $\ker\overline{\rho}$. Let $S_1,\dots,S_{d-1}$ be separating spheres in $dY_1\#\widetilde Y_2$ which separates the $d$ punctured copies of $Y_1$, then each $S_i$ lifts to $|G|$ disjoint spheres. Let $\widetilde S_i$ be one of the lifts of $S_i$. Then the complement of $\widetilde S_1\cup\cdots\cup\widetilde S_{d-1}$ in $\widetilde Y$ is connected. Thus $\widetilde Y$ has a connected summand $(d-1)S^2\times S^1$.
\end{proof}

When the monodromy is the identity, Conjecture~\ref{conj:SymplRed} holds true by the work of McCarthy \cite{McC}.
Baykur and Friedl \cite{BF} proved Conjecture~\ref{conj:SymplRed} in the case that the monodromy preserves a separating essential sphere.
Below we give a plausible but not rigorous argument to prove Conjecture~\ref{conj:SymplRed}.

If $X$ is as in Conjecture~\ref{conj:SymplRed}, then by Lemmas~\ref{lem:FindCov} and \ref{lem:CovSphere} we may assume $Y$ has an $S^1\times S^2$ summand, so the Monopole or Heegaard Floer homology of $Y$ with certain twisted coefficients is zero \cite{Ni}. The monodromy induces a map on the Floer homology of $Y$ \cite{JT}. The Seiberg--Witten or mixed invariant of the mapping torus should be the Lefschetz number of the map on Floer homology \cite{Froy}, hence should be zero. By the work of Taubes \cite{Taubes1}, $X$ cannot be symplectic.

\end{document}